\documentclass[a4paper,12pt]{amsart}

\usepackage[margin=3cm]{geometry}
\usepackage[pdftex]{graphicx}
\usepackage[utf8]{inputenc}
\usepackage{amsthm}
\usepackage{amsmath}
\usepackage{amssymb}
\usepackage{color}
\usepackage{amsaddr}
\usepackage{hyperref}
\usepackage{pgf,tikz}
\usetikzlibrary{arrows}

\theoremstyle{plain}
\newtheorem{theorem}{Theorem}[section]
\newtheorem{corollary}[theorem]{Corollary}
\newtheorem{lemma}[theorem]{Lemma}
\newtheorem{proposition}[theorem]{Proposition}
\newtheorem{definition}{Definition} [section]
\newtheorem{problem}{Problem} [section]
\newtheorem{example}{Example} [section]
\newtheorem{remark}{Remark} [section]

\title{Point-Ellipse and some other Exotic Configurations}

\author{G{\'a}bor G{\'e}vay}
\address{Bolyai Institute, University of Szeged, Hungary}
\email{gevay@math.u-szeged.hu}

\author{Nino Ba{\v s}i{\'c}, \quad Jurij Kovi\v{c}}
\address{FAMNIT, University of Primorska, Slovenia \\
IAM, University of Primorska, Slovenia \\
IMFM, Slovenia}
\email{nino.basic@famnit.upr.si}
\email{jurij.kovic@siol.net}

\author{Toma\v{z} Pisanski}
\address{IAM, University of Primorska, Slovenia \\
FAMNIT, University of Primorska, Slovenia \\
IMFM, Slovenia \\
FMF, University of Ljubljana, Slovenia}
\email{tomaz.pisanski@upr.si}

\thanks{The authors acknowledge support by the ARRS grant N1-0032, Slovenia and by the OTKA grant NN-114614, Hungary.
The second, third, and fourth author were also supported in part by the ARRS, grants P1-0294, N1-0011, J1-6720, and J1-7051.}  

\begin{document}

\nocite{*}
\maketitle

\begin{abstract}
In this paper we introduce \emph{point-ellipse configurations} and \emph{point-conic configurations}.
We study some of their basic properties and describe two interesting families of balanced point-ellipse, 
respectively point-conic 6-configurations. The construction of the first family is based on Carnot's theorem, 
whilst the construction of the second family is based on the Cartesian product of two regular polygons. 
Finally, we investigate a point-ellipse configuration based on the regular 24-cell.
\end{abstract}

\vspace{\baselineskip}

{\small \noindent
\emph{Keywords:} Point-line configuration, conic section, point-ellipse configuration, point-conic configuration, Levi graph, Carnot's theorem.}
\medskip

{\small \noindent
\emph{Math.\ Subj.\ Class.\ (2010):} 51A20 (Configuration theorems), 05B30 (Other designs, configurations)
}

\section{Introduction}

For an introductory text to configurations see the monograph by Gr\"unbaum~\cite{grunbaum2009} which
deals primarily with \emph{geometric point-line configurations}, or the book by Pisanski and Servatius~\cite{pisanski2013}
which takes a more combinatorial approach.
\begin{definition}
A \emph{geometric point-line $(p_q, n_k)$ configuration} is a family of $p$ points and $n$ lines, usually in real projective plane, 
such that each point is incident with precisely $q$ lines and each line is incident with precisely $k$ points.
\end{definition}
 If $p = n$ (and therefore $q = k$), then
the configuration is called \emph{balanced}. Instead of lines, one can also use some other geometric objects. For example, if circles
are used instead of lines, then \emph{point-circle configurations}~\cite{gevay2009, gevay2014} are obtained. Here we investigate 
\emph{point-ellipse configurations} and their generalisation, the so-called \emph{point-conic configurations}.

Configurations can also be studied in a purely combinatorial manner. The traditional definition is:
\begin{definition}
A \emph{combinatorial $(p_q, n_k)$-configuration} is an incidence structure 
$\mathcal{C} = ({P},{B},{I})$, where 
$I \subseteq {P} \times B$, $P \cap B = \emptyset$, $|P| = p$ and $|B| = n$. 
The elements of $P$ are called \emph{points}, the elements of ${B}$ are called \emph{blocks}, 
and the relation ${I}$ is called the \emph{incidence} relation. Furthermore, each line is incident 
with $k$ points, each point is incident with $q$ blocks, and two distinct points are incident with 
at most one common line, i.e.,
\begin{equation}
\{ (p_1, b_1), (p_2, b_1), (p_1, b_2), (p_2, b_2) \} \subseteq I, p_1 \neq p_2 \implies b_1 = b_2.
\label{eq:girthCondition}
\end{equation}
\end{definition}

The bipartite graph on the vertex set $P \cup B$ having an edge between $p \in P$ and $b \in B$ if and only if the elements
$p$ and $b$ are incident in $\mathcal{C}$, i.e., if $(p,b) \in I$, is called the \emph{Levi graph} of configuration $\mathcal{C}$ 
and is denoted by $L(\mathcal{C})$. Moreover, any configuration is completely determined by a $(q,k)$-regular bipartite graph
with a given black-and-white vertex colouring, where black vertices correspond to points and white vertices correspond to blocks.
Such a graph will be called a \emph{coloured Levi graph}. Note that the reverse colouring determines the dual configuration 
$\mathcal{C}^* = (B, P, I^*)$. 
Also, an isomorphism between configurations corresponds to colour-preserving isomorphism between their respective coloured 
Levi graphs.

Recall that a Levi graph of a point-line configuration has to have girth at least~6.

\begin{definition}
A combinatorial configuration whose Levi graph has girth at least 6 is called \emph{lineal}.
\end{definition}

The term lineal has been used by Branko Gr\"unbaum ~\cite {grunbaum} and also in~\cite{pisanski2013}. 
It is not hard to see that lineal Levi graphs can be characterized by forbidden subgraphs.

\begin{proposition}
A combinatorial configuration $\mathcal{C}$ is lineal if and only if $L(\mathcal{C})$ contains no $K_{2,2}$ subgraph.
\end{proposition}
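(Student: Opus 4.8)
The plan is to exploit the bipartite structure of $L(\mathcal{C})$, which immediately collapses the girth condition into a single forbidden cycle length. First I would record that $L(\mathcal{C})$ is bipartite with parts $P$ and $B$, so that every edge joins a point to a block and the graph contains no cycle of odd length. Consequently any cycle has even length, and the only cycle length strictly below~$6$ that can occur is~$4$; thus the condition ``girth at least~$6$'' is equivalent to the absence of a $4$-cycle.

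Next I would verify that, inside a bipartite graph, a $4$-cycle and a copy of $K_{2,2}$ are literally the same object. If $v_1 v_2 v_3 v_4 v_1$ is a $4$-cycle, then by bipartiteness $v_1, v_3$ lie in one part and $v_2, v_4$ in the other, and the four cycle edges are exactly the four edges of $K_{2,2}$ with parts $\{v_1, v_3\}$ and $\{v_2, v_4\}$; conversely every $K_{2,2}$ subgraph contains such a $4$-cycle. The would-be diagonals $v_1 v_3$ and $v_2 v_4$ can never appear, since they would join two vertices of the same part. Hence ``girth at least~$6$'' is equivalent to ``$L(\mathcal{C})$ contains no $K_{2,2}$ subgraph'', which is the assertion.

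To make the correspondence explicit in the language of the incidence structure, I would note that a $K_{2,2}$ on points $p_1 \neq p_2$ and blocks $b_1 \neq b_2$ is precisely the data $\{(p_1,b_1),(p_2,b_1),(p_1,b_2),(p_2,b_2)\} \subseteq I$. By the implication~(\ref{eq:girthCondition}), such data forces $b_1 = b_2$, so no genuine $K_{2,2}$ can occur. This shows that the forbidden-subgraph formulation is simply a graph-theoretic restatement of the incidence condition built into the definition.

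I do not expect a serious obstacle here: the entire argument rests on the single observation that bipartiteness removes odd cycles, after which ``girth at least~$6$'' and $C_4$-freeness coincide while $C_4 = K_{2,2}$. The only point requiring a word of care is the distinction between a $K_{2,2}$ as a (not necessarily induced) subgraph and as an induced subgraph; since bipartiteness automatically rules out the diagonals, the two notions agree here and no confusion arises.
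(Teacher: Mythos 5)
Your proof is correct and is precisely the argument the paper leaves to the reader (the proposition is stated without proof, following the remark that it ``is not hard to see''): bipartiteness of the Levi graph reduces ``girth at least $6$'' to $C_4$-freeness, and a $C_4$ in a bipartite graph is exactly a $K_{2,2}$ subgraph. Your closing observation that condition~(\ref{eq:girthCondition}) already forbids $K_{2,2}$ is also apt, though it shows the proposition is only non-vacuous if one reads it as applying to incidence structures before that condition is imposed.
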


Note that lineality is preserved by duality

\begin{proposition}
A combinatorial configuration $\mathcal{C}$ is lineal if and only if its dual is lineal.
\end{proposition}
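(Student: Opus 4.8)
The plan is to exploit the fact that dualising a configuration changes only the colouring of its Levi graph, not the underlying graph itself. As noted in the text preceding the statement, the dual $\mathcal{C}^* = (B, P, I^*)$ is precisely the configuration whose coloured Levi graph is obtained from $L(\mathcal{C})$ by swapping the black and white colour classes. In particular, once we forget the colouring, $L(\mathcal{C})$ and $L(\mathcal{C}^*)$ are one and the same bipartite graph.

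First I would invoke the previous proposition, which characterises lineality by the absence of a $K_{2,2}$ subgraph: $\mathcal{C}$ is lineal if and only if $L(\mathcal{C})$ contains no $K_{2,2}$. The property of containing a $K_{2,2}$ subgraph refers only to the uncoloured graph — it makes no mention of which vertices represent points and which represent blocks — and $L(\mathcal{C})$ and $L(\mathcal{C}^*)$ coincide as uncoloured graphs. Hence one contains a $K_{2,2}$ exactly when the other does.

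Stringing these equivalences together yields the result: $\mathcal{C}$ is lineal iff $L(\mathcal{C})$ is $K_{2,2}$-free iff $L(\mathcal{C}^*)$ is $K_{2,2}$-free iff $\mathcal{C}^*$ is lineal. Alternatively, one can argue straight from the definition rather than through the forbidden-subgraph criterion: girth is manifestly invariant under a recolouring of the vertices, so $L(\mathcal{C})$ and $L(\mathcal{C}^*)$ have equal girth, and therefore both meet the girth-$\geq 6$ requirement or neither does.

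There is no genuine obstacle here; the single point demanding care is to state explicitly that duality acts only on the colouring and leaves the underlying Levi graph fixed, so that every colour-independent graph invariant — girth and $K_{2,2}$-freeness alike — is automatically shared by a configuration and its dual.
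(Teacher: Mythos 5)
Your proof is correct and follows exactly the reasoning the paper intends (the paper states this proposition without proof, treating it as immediate): duality merely swaps the colour classes of the Levi graph, and both the girth condition and $K_{2,2}$-freeness are colour-independent, the latter because $K_{2,2}$ — unlike $K_{3,2}$ versus $K_{2,3}$ in the circular case — is symmetric in its two sides. Nothing is missing.
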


When carrying concepts over from point-line configurations to point-circle configurations, one has to take into account 
that two circles may intersect in at most two points, since when intersecting in three points they must coincide. 

\begin{definition}
A combinatorial configuration  $\mathcal{C}$ such that any two combinatorial 
circles meet in at most two combinatorial points is called \emph{circular}.
\end{definition}

Analogously to the case of combinatorial lineal configurations that can be characterized by forbidden subgraphs, 
combinatorial circular configurations admit a similar forbidden subgraph characterization.

\begin{proposition} \label{prop:circular}
A combinatorial configuration $\mathcal{C}$ is circular if and only if the coloured Levi graph $L(\mathcal{C})$ contains no $K_{3,2}$ subgraph.
\end{proposition}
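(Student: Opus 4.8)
The plan is to prove both implications by translating the defining property of circularity directly into a statement about common neighbourhoods of vertices in the Levi graph, mirroring the way one obtains the $K_{2,2}$ characterisation of lineality. The whole content is a dictionary between ``intersection points of two circles'' and ``common neighbours of two white vertices'', after which each direction is a one-line counting argument.

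First I would fix the dictionary. In the coloured Levi graph $L(\mathcal{C})$ the black vertices are the combinatorial points and the white vertices are the combinatorial circles (blocks), and a black vertex $p$ is joined to a white vertex $b$ exactly when $(p,b) \in I$. Hence, for two distinct circles $b_1, b_2$, a point $p$ lies on both $b_1$ and $b_2$ if and only if $p$ is a common neighbour of $b_1$ and $b_2$ in $L(\mathcal{C})$; consequently the number of points in which $b_1$ and $b_2$ meet equals the number of common neighbours of the white vertices $b_1$ and $b_2$. With this in hand the two directions are immediate. For the converse, suppose $\mathcal{C}$ is not circular, so some pair of circles $b_1, b_2$ meets in at least three points $p_1, p_2, p_3$; then $\{p_1,p_2,p_3\}$ together with $\{b_1,b_2\}$ and the six incidences among them are precisely a copy of $K_{3,2}$ in $L(\mathcal{C})$. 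For the forward direction I argue contrapositively: if $L(\mathcal{C})$ contains a $K_{3,2}$, then, since $K_{3,2}$ is connected and every edge of $L(\mathcal{C})$ joins the two colour classes, its three-vertex side and its two-vertex side each lie entirely within one colour class. Reading the copy with two white vertices $b_1, b_2$ on the two-side, these circles share the three black vertices of the other side, so they meet in at least three points and $\mathcal{C}$ fails to be circular.

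The one step that needs genuine care is the role of the colouring, and this is where the circular case differs from the self-dual $K_{2,2}$ characterisation of lineality. A copy of $K_{3,2}$ in the bipartite graph $L(\mathcal{C})$ could a priori have its three-vertex side supplied by \emph{either} colour class: the orientation with points on the three-side witnesses two circles through three common points (a failure of circularity), whereas the orientation with circles on the three-side witnesses three circles through two common points, which is a statement about the dual structure. I would therefore make explicit that circularity is a condition on common neighbourhoods of the white (circle) vertices, and identify the forbidden $K_{3,2}$ accordingly; flagging this asymmetry is the main obstacle, since everything else reduces to the common-neighbour dictionary above.
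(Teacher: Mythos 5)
Your proof is correct, and it is exactly the argument the paper leaves implicit: the paper states this proposition without proof, adding only the convention that $K_{m,n}$ means $m$ black (point) vertices and $n$ white (block) vertices, which is precisely the colouring asymmetry you rightly single out as the one point needing care. The common-neighbour dictionary and the two contrapositive directions are the intended one-line argument.
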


Note that here, and later, any reference to $K_{m,n}$ refers to $m$ points alias black vertices, and $n$ blocks alias white vertices.
However, here the similarity between lineal and circular configurations comes to an end. In particular, circularity is 
not preserved by duality. Namely, the Levi graph of a circular configuration does not contain $K_{3,2}$, however it 
may contain $K_{2,3}$, or even $K_{2,k}, k >3$. Although the notion of circularity need not be preserved by duality, 
it is interesting to consider circular configurations having a circular dual.

\begin{definition}
A combinatorial circular configuration  $\mathcal{C}$ having circular dual is called \emph{strongly circular}.
\end{definition}

From Proposition~\ref{prop:circular} the following characterization of strongly circular configurations follows readily.

\begin{proposition}
A combinatorial configuration $\mathcal{C}$ is strongly circular if and only if the coloured Levi graph $L(\mathcal{C})$ contains 
neither $K_{3,2}$ nor $K_{2,3}$ subgraph. This, in turn, is equivalent to saying that any two 4-cycles in 
$L(\mathcal{C})$ may share at most one edge.
\end{proposition}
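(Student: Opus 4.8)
The plan is to establish the two stated equivalences in turn. For the first, I would begin directly from the definition: $\mathcal{C}$ is strongly circular precisely when both $\mathcal{C}$ and its dual $\mathcal{C}^*$ are circular. By Proposition~\ref{prop:circular}, circularity of $\mathcal{C}$ is equivalent to $L(\mathcal{C})$ containing no $K_{3,2}$ subgraph (three black, two white, in the convention of the excerpt). Since $L(\mathcal{C}^*)$ is obtained from $L(\mathcal{C})$ simply by interchanging the two colour classes, a $K_{3,2}$ in $L(\mathcal{C}^*)$ corresponds exactly to a $K_{2,3}$ in $L(\mathcal{C})$; hence $\mathcal{C}^*$ is circular if and only if $L(\mathcal{C})$ contains no $K_{2,3}$. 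Conjoining the two conditions gives the first equivalence. This is the step the statement calls ``follows readily''.

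For the second equivalence I would argue by contrapositive, through a classification of how two 4-cycles can overlap. Since $L(\mathcal{C})$ is bipartite, every 4-cycle alternates colours and is therefore a copy of $K_{2,2}$ spanned by two black vertices $p_1,p_2$ and two white vertices $b_1,b_2$. The first observation I would record is that $K_{2,2}$ contains a \emph{unique} cyclic subgraph (removing any edge leaves a tree). Consequently, if two distinct 4-cycles $C\neq C'$ shared three or four edges, or two disjoint (non-adjacent) edges, then the shared edges would already span all four vertices of $C$; being a 4-cycle, $C'$ would span exactly those same four vertices and, by uniqueness, coincide with $C$ — a contradiction. Hence any two distinct 4-cycles sharing at least two edges must in fact share two edges meeting at a common vertex.

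Next I would analyse such a shared length-two path $u_1\,v\,u_2$. Writing $C=u_1\,v\,u_2\,w$ and $C'=u_1\,v\,u_2\,w'$ with $w\neq w'$ (forced, since $C\neq C'$ yet they agree on the path), the apexes $w,w'$ lie in the same colour class as $v$, while $u_1,u_2$ lie in the opposite class. If $v$ is a point, then $v,w,w'$ are three points each adjacent to the two blocks $u_1,u_2$, producing a $K_{3,2}$; if $v$ is a block, the symmetric argument yields a $K_{2,3}$. For the converse I would simply exhibit, from a given $K_{3,2}$ on $\{p_1,p_2,p_3\}\cup\{b_1,b_2\}$, the two distinct 4-cycles $p_1\,b_1\,p_2\,b_2$ and $p_1\,b_1\,p_3\,b_2$, which share both edges incident to $p_1$; the dual construction handles $K_{2,3}$. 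Together these show that $L(\mathcal{C})$ contains a $K_{3,2}$ or a $K_{2,3}$ exactly when some two distinct 4-cycles share more than one edge, which is precisely the contrapositive of the claimed condition.

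I expect the only genuine obstacle to be bookkeeping rather than mathematical depth: keeping the colour convention for $K_{m,n}$ consistent (so that passing to the dual really does interchange $K_{3,2}$ with $K_{2,3}$), and being exhaustive in the overlap case analysis — in particular disposing cleanly of the ``two disjoint shared edges'' case via the uniqueness of the cyclic subgraph of $K_{2,2}$. No nontrivial computation is required.
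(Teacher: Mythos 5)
Your proof is correct, and it follows the only natural route: the paper itself gives no explicit argument, merely asserting that the proposition ``follows readily'' from Proposition~\ref{prop:circular}, and your first paragraph is precisely that intended deduction (circularity of the dual corresponds to forbidding $K_{2,3}$ because dualizing swaps the colour classes). Your second paragraph supplies, correctly and completely, the 4-cycle overlap analysis that the paper leaves entirely implicit --- in particular the observation that two distinct 4-cycles in a bipartite graph cannot share two vertex-disjoint edges or three edges, so that an overlap of more than one edge forces a shared path of length two and hence a $K_{3,2}$ or $K_{2,3}$.
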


In the past the emphasis in the study of configurations has been on point-line configurations. 
Nevertheless there are several works dealing with point-circle configurations; see for instance 
\cite{boben2015,gevay2014,gevay2018b,gevay2018a,gevay2019}.  For unusual isometric point-circle configurations,
see \cite{izquierdo2016,stokes2016}.

It is well-known that there exist many lineal configurations that are not linear. In other words, there are
combinatorial point-line configurations, such as the Fano plane, that do not admit a geometric realization. In general, 
the problem which lineal configurations are linear remains open. One may pose an analogous problem for circular 
configurations. 
\begin{problem}
Which circular combinatorial configurations admit geometric realization by points and circles?
\end{problem}
\begin{figure}[!h]
\centering
    \includegraphics[width=0.85\textwidth]{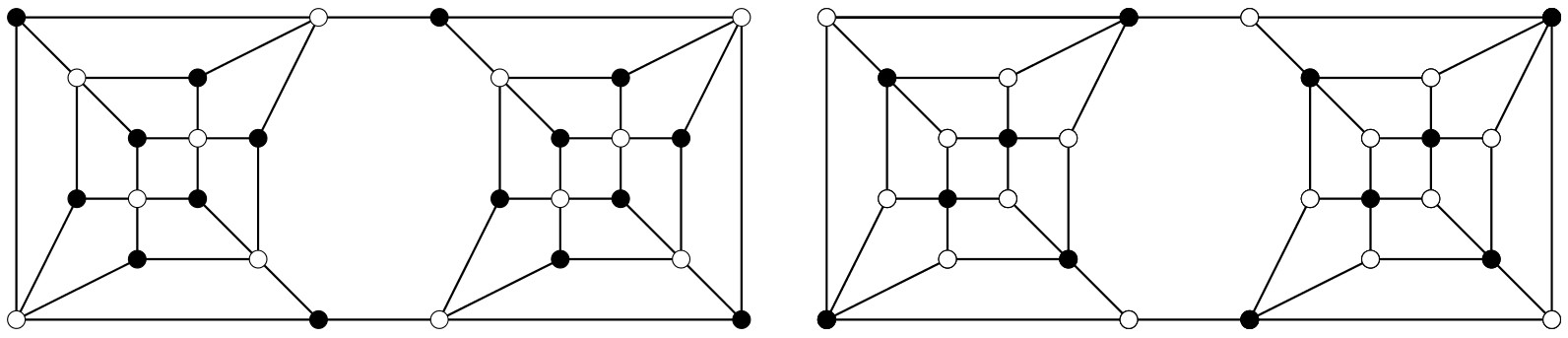}
\caption{On the left, the Levi graph of a circular $(16_3, 12_4)$ configuration that cannot be realized as geometric configuration of points and circles. 
On the right the same graph with colors switched is the Levi graph of the dual $(12_4,16_3)$ configuration that can be realized by points and circles.}
\end{figure}

\begin{proposition}
There exist combinatorial circular configurations that have no geometric realization as point-circle configurations.
\end{proposition}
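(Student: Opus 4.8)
The plan is to prove the statement constructively: I would exhibit one explicit combinatorial circular configuration together with a proof that it admits no realization by genuine points and circles. The natural candidate is the object already displayed in the figure, the circular $(16_3,12_4)$ configuration, whose defining feature is that it is an \emph{anti-Miquel} configuration — its abstract incidences satisfy the combinatorial definition of circularity, yet any attempt to draw it with real circles collides with Miquel's six-circle theorem.

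First I would record the combinatorial object precisely by writing down its coloured Levi graph $L(\mathcal{C})$ (the left graph in the figure): $16$ black vertices of degree $3$ and $12$ white vertices of degree $4$. Checking that $\mathcal{C}$ is circular is then a purely finite verification: by Proposition~\ref{prop:circular} it suffices to confirm that $L(\mathcal{C})$ contains no $K_{3,2}$ subgraph, i.e.\ that no two of the twelve block-vertices share three common point-neighbours. Equivalently, every pair of circles meets in at most two points, which is exactly the condition a point-circle picture is permitted to impose.

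The core of the argument — and the step I expect to be the real obstacle — is non-realizability. Here I would argue by contradiction: suppose $\mathcal{C}$ were realized by concrete points $P_1,\dots,P_{16}$ and circles $c_1,\dots,c_{12}$ respecting all incidences. The configuration is built to contain the Miquel cube $(8_3,6_4)$ as a subconfiguration, in fact several overlapping copies sharing circles, so that a short chain of applications of Miquel's theorem forces an additional quadruple of the points to be concyclic. This forced circle, however, is incompatible with the prescribed structure: it manifests either as two of the points $P_i$ being compelled to coincide, or as a fifth configuration point being driven onto a block meant to carry only four, thereby creating in the geometric picture a $K_{3,2}$ that $L(\mathcal{C})$ does not contain. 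Either outcome contradicts the assumption of a faithful realization, so none exists.

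The delicate points to pin down are (a) that the global incidence pattern genuinely closes up under Miquel's theorem in the claimed way, which requires exhibiting the cube subconfigurations explicitly and keeping track of which side of each circle the relevant intersection points lie on; and (b) that the contradiction is truly geometric and cannot be evaded by some cleverer but still incidence-preserving placement. Once non-realizability of $\mathcal{C}$ is established the proposition follows at once, since $\mathcal{C}$ is then a combinatorial circular configuration with no point-circle model; its dual $(12_4,16_3)$, shown on the right in the figure, incidentally witnesses that circularity alone is far from sufficient for realizability and that realizability is not preserved by duality.
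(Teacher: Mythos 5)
Your proposal follows essentially the same route as the paper: the example is the small Anti-Miquel configuration, a circular $(16_3,12_4)$ configuration obtained by an incidence switch on two copies of the Miquel configuration, with non-realizability coming from Miquel's six-circle theorem forcing an incidence that the combinatorial structure forbids. In fact the paper's own proof is terser than your sketch --- it merely names the construction and leaves the Miquel-theorem contradiction implicit --- so the step you flag as the ``real obstacle'' (exhibiting the cube subconfigurations and checking that the forced concyclicity genuinely contradicts the prescribed incidences) is precisely the part the paper also does not write out.
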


\begin{proof}
For instance, take two copies of the Miquel configuration and make an appropriate \emph{incidence switch} between 
them. For the definition of the incidence switch, see \cite{pisanski2013}. The resulting Levi graph of the corresponding 
point-circle $(16_3,12_4)$ configuration is depicted in Figure~\ref{fig:AntiMiquel}. It is called the \emph{small Anti-Miquel configuration}.
\end{proof}

\begin{figure}[!h]
\centering
    \includegraphics[width=0.45\textwidth]{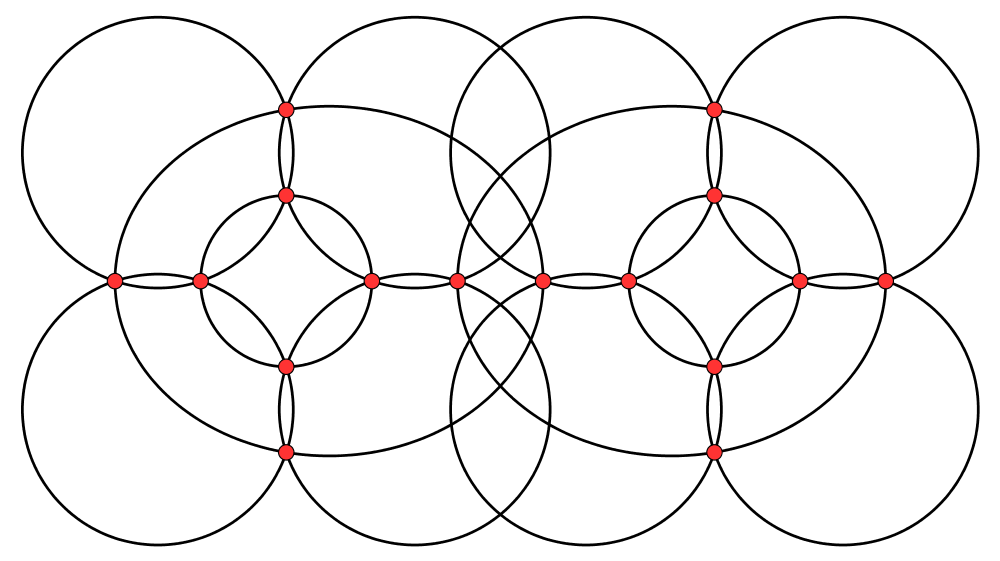}
    \includegraphics[width=0.465\textwidth]{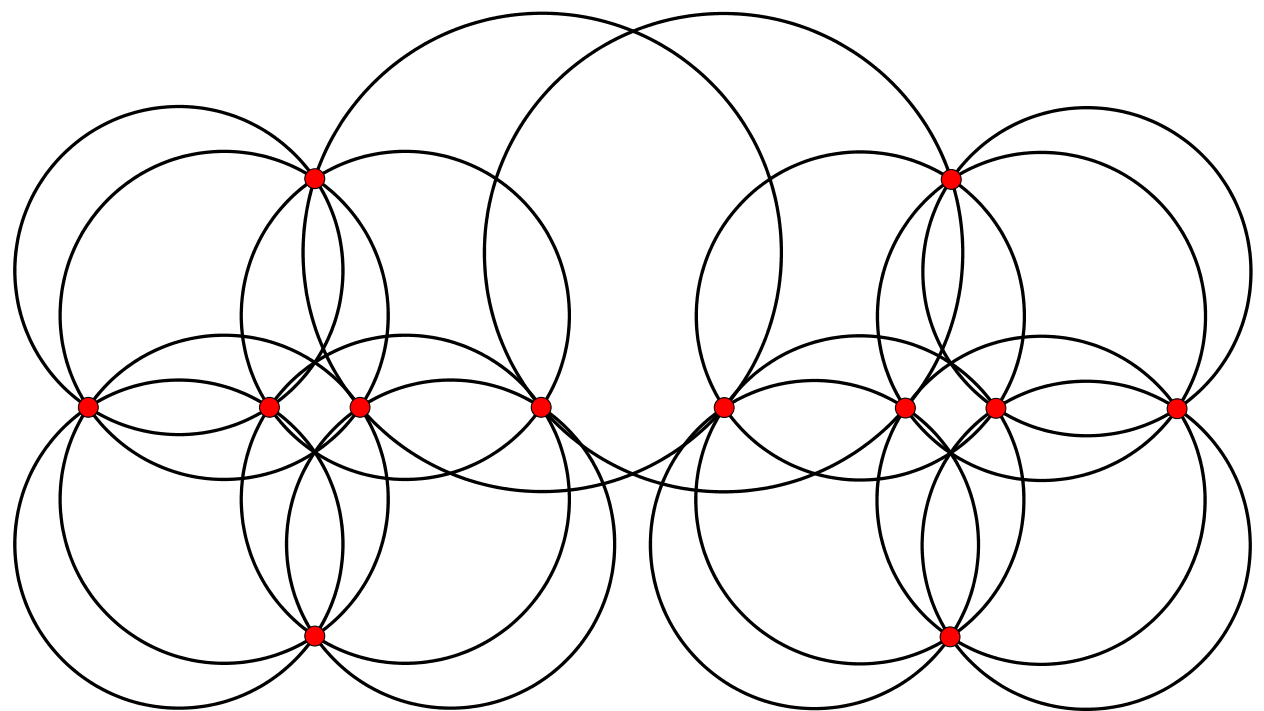}
\caption{Left: A point-ellipse realization of the circular $(16_3,12_4)$ configuration derived from the previous Figure. 
           Note that only two ellipses are used. The rest are circles. We call it the small Anti-Miquel configuration. Right: Its dual admits a point-circle realization.}
\label{fig:AntiMiquel}
\end{figure}

\begin{figure}[!h]
\centering
    \includegraphics[width=0.5\textwidth]{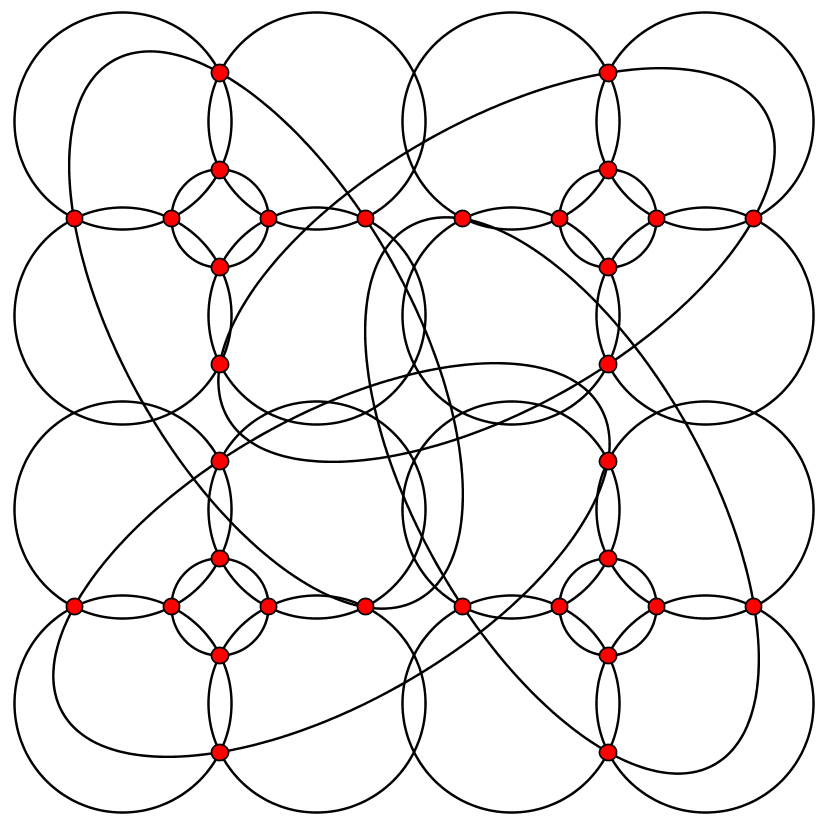}
\caption{A circular $(32_3,24_4)$ configuration that is obtained from four small Miquel configurations as a cyclic incidence cascade. 
            We call it the large Anti-Miquel configuration.}
\label{fig:AntiMiquelLarge}
\end{figure}

Note that the small Anti-Miquel configuration is only 2-connected. This means that its Levi graph disconnects if we delete two of its vertices.
The incidence switch that leads to the small Anti-Miquel configuration can easily be generalized to the \emph{cyclic incidence cascade}. 
Figure~\ref{fig:AntiMiquelLarge} depicts the large Anti-Miquel configuration by performing such a cyclic incidence cascade on four copies 
of the Miquel configuration. Obvously, the large Anti-Miquel configuration is also 2-connected. Since neither the small Anti-Miquel configuration 
nor the large Anti-Miquel configuration contains $K_{2,3}$ or $K_{3,2}$, their duals are also circular, too; hence they are all strongly circular. 

The following problem may be hard to solve (recall that a point-circle configuration is called \emph{isometric} if all its circles are 
of the same size \cite{gevay2014}.

\begin{problem}
Which strongly circular combinatorial configurations admit isometric realization by points and circles?
\end{problem} 

So far we have not found a 3-connected example.
Note that the dual Anti-Miquel configuration is a  strongly circular configuration that has no 
isometric geometric point-circle realization but admits geometric point-circle realization that is not isometric.

\begin{theorem}
Every lineal $3$-configuration has a geometric realization as a point-circle configuration.
\end{theorem}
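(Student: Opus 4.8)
The plan is to exploit the defining feature of a $3$-configuration: every block is incident with exactly three points, and three points in general position determine a unique circle. This is precisely where point-circle configurations part company with point-line ones. A line is fixed by two points, so a block of size three over-determines it and forces a genuine algebraic constraint — this is why the Fano plane admits no point-line realization; a circle, by contrast, is fixed by exactly three points, so a block of size three determines it without any constraint, and the points may be placed with complete freedom.

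Concretely, I would proceed as follows. Let $\mathcal{C}$ be a lineal $3$-configuration with points $\{p_1,\dots,p_n\}$ and blocks $\{b_1,\dots,b_n\}$, and regard a placement as a point $v=(v_1,\dots,v_n)\in(\mathbb{R}^2)^n$ that assigns to each combinatorial point $p_i$ a location $v_i$. For each block $b=\{p_i,p_j,p_k\}$ let $C_b$ be the unique circle through $v_i,v_j,v_k$. Every prescribed incidence then holds by construction, since $C_b$ passes through the images of all three points of $b$. It remains only to choose $v$ so that the realization is \emph{faithful}, that is, so that no unintended coincidences occur.

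The unwanted events are: (0) two combinatorial points receive the same location; (i) the three points of some block are collinear, so that $C_b$ degenerates; (ii) two distinct blocks $b\neq b'$ yield the same circle; and (iii) some location $v_\ell$ lies on a circle $C_b$ with $p_\ell\notin b$. Each is cut out by a polynomial equation in the coordinates of $v$: coincidence and collinearity by the vanishing of the obvious $2\times 2$ and $3\times 3$ determinants, and concyclicity of four points by the vanishing of the classical $4\times 4$ determinant whose rows are $(x_i^2+y_i^2,\,x_i,\,y_i,\,1)$. Thus each bad event defines a Zariski-closed subset of $(\mathbb{R}^2)^n$, and I would verify that each is \emph{proper}. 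For (0) and (i) this is immediate. For (iii), the offending location $v_\ell$ is not among the three points determining $C_b$, so it can be displaced off that circle while leaving $C_b$ fixed. Finally (ii) is subsumed by (iii): lineality forces $|b\cup b'|\ge 5$, so if the two circles coincided then a point of $b'\setminus b$ would lie on $C_b$, which is an event of type (iii). Since a finite union of proper Zariski-closed subsets of the irreducible variety $(\mathbb{R}^2)^n$ is again proper, its complement is dense and of full Lebesgue measure, and any $v$ in that complement yields the required point-circle realization.

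The construction is therefore essentially forced once the key observation is in hand, and the only real work is the genericity bookkeeping of the last paragraph — above all, the verification that none of the bad loci is the whole space. Two points deserve emphasis. First, the asymmetry with the point-line setting: because a circle carries one degree of freedom more than a line, the incidences of a $3$-configuration impose no constraint whatsoever on the freely chosen points, so realizability is automatic rather than exceptional. Second, lineality enters only through the bound $|b\cup b'|\ge 5$, which prevents distinct blocks from collapsing onto a common circle and guarantees that the output is a genuine circular configuration. If one works in the inversive plane and admits lines as degenerate circles, even caveat (i) disappears.
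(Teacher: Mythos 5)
Your proposal is correct and follows essentially the same route as the paper: place the configuration points in general position (the paper asks for no three collinear and no four concyclic, which is exactly the union of your bad loci (i)--(iii)) and let each block's circle be the one determined by its three points. Your version merely makes explicit the genericity argument that the paper leaves implicit, so no further changes are needed.
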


\begin{proof}
Place the points of the configuration in the plane in general position: no three on a line, no four on a circle. The configuration circles are
determined by the triples of its configuration points. They are all distinct.
\end{proof}

\section{Point-Ellipse and Point-Conic Configurations}

When moving from point-circle to point-ellipse configurations we are faced with two new aspects. 
Namely two ellipses may intersect in up to four points. Thus ellipses may be distinguished from 
circles already in a combinatorial description. On the other hand, ellipses are essentially conics 
and so far we have found no combinatorial argument for distinguishing the two concepts.

The following theorem is well known in projective geometry~\cite{coxeter1974}.
\begin{theorem} \label{FivePointsThm}
In the real projective plane, any five distinct points, such that no three of them are collinear, 
determine a unique conic passing through them.
\end{theorem}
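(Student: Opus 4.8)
The plan is to reduce the statement to linear algebra on the six-dimensional space of quadratic forms, settling existence by a dimension count and uniqueness by a pencil argument that is exactly where the hypothesis that no three points are collinear gets used. First I would fix homogeneous coordinates $(x:y:z)$ and recall that every conic is the zero locus of a form $Q(x,y,z) = a x^2 + b xy + c y^2 + d xz + e yz + f z^2$, so conics are parametrised by their coefficient vector $(a:b:c:d:e:f)$ ranging over $\mathbb{P}^5$. The requirement that the conic pass through a given point $P_i = (x_i:y_i:z_i)$ is the single homogeneous linear equation $Q(x_i,y_i,z_i) = 0$ in the six coefficients. Five points thus impose five homogeneous linear conditions on a six-dimensional coefficient space, and any such system has a nontrivial solution; hence at least one conic through the five points exists. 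This part uses no hypothesis on the position of the points.

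The substance of the theorem is uniqueness, and here I would argue by contradiction. Suppose two conics $Q$ and $Q'$ through $P_1,\dots,P_5$ define non-proportional forms. Then every member of the pencil $\lambda Q + \mu Q'$ also vanishes at all five points, and each such member is a genuine nonzero form since $Q$ and $Q'$ are linearly independent. Choose a point $R$ on the line $\ell = P_1 P_2$ with $R \neq P_1, P_2$. The condition that a pencil member pass through $R$ is one linear equation in $(\lambda:\mu)$, so it is satisfied by some member $C$. Now $C$ vanishes at the three distinct collinear points $P_1, P_2, R$ of $\ell$, so the restriction of its defining quadratic to $\ell$ is a univariate polynomial of degree at most two having three distinct roots; it must therefore vanish identically, forcing $\ell$ to be a component of $C$, say $C = \ell \cup m$ for a residual line $m$.

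Finally I would extract a contradiction from the residual line $m$. Because no three of the five points are collinear and $P_1, P_2$ already lie on $\ell$, none of $P_3, P_4, P_5$ can lie on $\ell$; since they all lie on $C = \ell \cup m$, they must all lie on $m$. That places three of the five points on the single line $m$, contradicting the hypothesis. Hence $Q$ and $Q'$ are proportional, and the conic is unique. The main obstacle is precisely this uniqueness step: existence is a free dimension count, whereas uniqueness requires converting \emph{no three collinear} into the key observation that a conic meeting a line in three points must contain that line, and then reading off a fresh collinearity from the residual component.
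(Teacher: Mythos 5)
Your argument is correct and complete. Note, however, that the paper does not prove this statement at all: it is quoted as a classical fact with a reference to Coxeter, so there is no in-paper proof to compare against. What you give is the standard argument --- existence by counting (five homogeneous linear conditions on the six-dimensional space of quadratic forms), uniqueness by passing a pencil member through a third point of the line $P_1P_2$, splitting off that line, and reading a forbidden collinearity from the residual line --- and every step checks out: the pencil member is a nonzero form because $Q$ and $Q'$ are independent, a quadratic vanishing at three distinct points of a line vanishes on the line and hence has its linear form as a factor, and the hypothesis keeps $P_3,P_4,P_5$ off $\ell$ and thus forces them onto $m$. One small observation worth adding if you want the statement in the form the paper actually uses later (Corollary~\ref{cor:intersect5} speaks of non-degenerate conics): under the no-three-collinear hypothesis the unique conic you produce is automatically non-degenerate, since a degenerate conic is a line pair or double line and five points on two lines would put three on one of them.
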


\begin{corollary}
\label{cor:intersect5}
If two non-degenerate conics $\mathcal{C}_1$ and $\mathcal{C}_2$ intersect in five different points, 
then they coincide, i.e., $\mathcal{C}_1 = \mathcal{C}_2$.
\end{corollary}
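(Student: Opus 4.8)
The plan is to reduce the statement directly to Theorem~\ref{FivePointsThm}. Let $p_1,\dots,p_5$ denote the five distinct points lying on both $\mathcal{C}_1$ and $\mathcal{C}_2$. If I can show that no three of these five points are collinear, then Theorem~\ref{FivePointsThm} applies: the five points determine a \emph{unique} conic through them, and since both $\mathcal{C}_1$ and $\mathcal{C}_2$ pass through all five, they must be this same conic, giving $\mathcal{C}_1 = \mathcal{C}_2$.

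So the crux is verifying the hypothesis of Theorem~\ref{FivePointsThm}, namely the ``no three collinear'' condition. The key observation I would establish first is that a non-degenerate conic meets any line in at most two points. I expect this to be the main (and really the only) obstacle, though it is a standard fact. One way to see it: a line can be parametrised as $t \mapsto a + t b$ in homogeneous coordinates, and substituting this into the quadratic form defining the conic yields a polynomial of degree at most two in $t$, which therefore has at most two roots unless it vanishes identically; the latter would force the line to lie inside the conic, contradicting non-degeneracy. Alternatively, this is an immediate instance of B\'ezout's theorem, since a line and an irreducible conic have intersection multiplicity $1 \cdot 2 = 2$.

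Granting this fact, the argument finishes quickly. Suppose for contradiction that three of the points, say $p_1, p_2, p_3$, were collinear, lying on a common line $\ell$. Then $\ell$ would meet $\mathcal{C}_1$ in at least three points, contradicting the bound just established for the non-degenerate conic $\mathcal{C}_1$. Hence no three of $p_1,\dots,p_5$ are collinear, the hypothesis of Theorem~\ref{FivePointsThm} is satisfied, and the conclusion follows. It is worth noting where each hypothesis is used: non-degeneracy of the conics is precisely what guarantees the line--conic bound and thus the general position of the five intersection points, while the assumption of \emph{five} (rather than four) common points is exactly the number needed to pin down a conic uniquely.
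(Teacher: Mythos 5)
Your proof is correct and follows exactly the route the paper intends: the paper states this as an immediate corollary of Theorem~\ref{FivePointsThm} without giving any proof, and your argument supplies precisely the expected details, namely that non-degeneracy forces the five common points to be in general position (no three collinear, via the line--conic intersection bound) so that the uniqueness in the theorem applies. Your explicit verification of the ``no three collinear'' hypothesis is a worthwhile addition rather than a deviation.
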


Let $C$ be a set of five points in the plane in linear position (i.e.\ no three of them are collinear). 
The conic that passes through the points in $C$ is denoted by $\mathcal{C}(C)$ in the present paper.

\begin{definition}
A combinatorial configuration $\mathcal{C}$ is \emph{conical} if any two distinct conics meet in at most four points.
\end{definition}

\begin{proposition}
A combinatorial configuration $\mathcal{C}$ is conical if and only if the coloured Levi graph $L(\mathcal{C})$ contains no $K_{5,2}$ subgraph.
\end{proposition}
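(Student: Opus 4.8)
The plan is to prove this by establishing the equivalence between the combinatorial condition (conical) and the forbidden subgraph condition ($K_{5,2}$-free), mirroring the structure of the earlier propositions for lineal and circular configurations. I would prove the contrapositive in both directions, so the key observation is to translate between "two conics meeting in five or more points" and "a $K_{5,2}$ subgraph in the Levi graph."

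First I would unwind the definitions. In the coloured Levi graph $L(\mathcal{C})$, black vertices are points and white vertices are blocks (combinatorial conics). Following the convention stated in the excerpt, a $K_{5,2}$ subgraph consists of $5$ black vertices and $2$ white vertices, with every one of the five black vertices adjacent to both white vertices. By the definition of the Levi graph, this is precisely the situation in which there exist two distinct combinatorial conics $b_1, b_2$ and five distinct combinatorial points $p_1, \dots, p_5$ such that each $p_i$ is incident with both $b_1$ and $b_2$. In configuration language, this says the two conics $b_1$ and $b_2$ share at least five common points.

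The forward direction (contrapositive): suppose $L(\mathcal{C})$ contains a $K_{5,2}$ subgraph. Then by the translation above there are two distinct conics meeting in at least five points, so $\mathcal{C}$ is not conical. The reverse direction (contrapositive): suppose $\mathcal{C}$ is not conical, so some two distinct conics meet in at least five points; choosing five of these common points yields five black vertices each adjacent to the two white vertices representing the conics, which is exactly a $K_{5,2}$ subgraph. Thus $\mathcal{C}$ is conical if and only if $L(\mathcal{C})$ is $K_{5,2}$-free.

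I do not anticipate a serious mathematical obstacle here, since the argument is essentially a direct dictionary translation and parallels the proof pattern for Proposition~\ref{prop:circular} ($K_{3,2}$ and circularity). The one point requiring a little care is the bookkeeping on the colour classes: I must be explicit that the five vertices lie in the black (point) class and the two in the white (conic) class, because with the reversed colouring the subgraph would instead be $K_{2,5}$ and would speak about the dual configuration. Making the vertex-colour roles explicit—consistent with the remark that $K_{m,n}$ denotes $m$ black and $n$ white vertices—is what keeps the statement precise and guards against confusing conicality with its dual notion.
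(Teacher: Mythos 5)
Your proof is correct and is exactly the intended argument: the paper states this proposition without proof, treating it as the same routine dictionary translation you carry out (five common points of two distinct blocks $\leftrightarrow$ a $K_{5,2}$ with five black and two white vertices), in parallel with Proposition~\ref{prop:circular}. Your explicit care about which colour class holds the five vertices is consistent with the paper's stated convention and is the only point of substance.
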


\begin{definition}
A conical configuration $\mathcal{C}$ is called \emph{strongly conical} if it its dual is also conical.
\end{definition}

\begin{proposition}
A graph $G$ with a given black-and-white colouring is a coloured Levi graph of a \emph{combinatorial} conical configuration if and only
if $G$ is bipartite and $K_{5, 2} \nsubseteq G$.
\end{proposition}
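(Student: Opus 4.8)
The plan is to establish both implications by leaning on the preceding proposition, which asserts that a combinatorial configuration is conical exactly when its coloured Levi graph contains no $K_{5,2}$. The only genuinely new content beyond that proposition is the recognition that \emph{bipartiteness} is precisely the condition needed for a black-and-white coloured graph to arise as a Levi graph in the first place, so the whole argument is essentially a translation between the language of configurations and the language of coloured graphs.

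First I would treat the forward direction. Suppose $G = L(\mathcal{C})$ for some combinatorial conical configuration $\mathcal{C}$. By the very definition of the Levi graph, every edge joins a point (black) to a block (white), so the black-and-white colouring is a proper $2$-colouring and $G$ is bipartite. Since $\mathcal{C}$ is conical, the preceding proposition gives $K_{5,2} \nsubseteq G$ at once.

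For the converse I would argue by direct construction. Let $G$ be bipartite and $K_{5,2}$-free, with colour classes $P$ (black) and $B$ (white). Define an incidence structure $\mathcal{C} = (P, B, I)$ by putting $(p,b) \in I$ exactly when $pb$ is an edge of $G$. Since $P$ and $B$ are disjoint colour classes and $G$ is bipartite, we have $I \subseteq P \times B$, and reading off the Levi graph recovers $G$ together with its colouring, i.e.\ $L(\mathcal{C}) = G$ as coloured graphs. Because $K_{5,2} \nsubseteq G = L(\mathcal{C})$, the preceding proposition certifies that $\mathcal{C}$ is conical, and hence $G$ is the coloured Levi graph of a combinatorial conical configuration, as required.

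No step here is a serious obstacle; the argument is short precisely because the word \emph{combinatorial} frees us from any geometric realizability requirement. The only point demanding care is conceptual: in the conical setting the term ``configuration'' must be read in the relaxed sense of an incidence structure subject only to the relevant intersection bound (no two blocks sharing five common points), and not with the classical girth-$\ge 6$ axiom. Indeed, forbidding $K_{2,2}$ would a fortiori forbid $K_{5,2}$ and render conicality vacuous, so it is exactly this relaxation that gives the characterization its content.
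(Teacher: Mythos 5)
Your argument is correct and coincides with what the paper intends: the paper states this proposition without proof, treating it as an immediate consequence of the preceding proposition (conical $\Leftrightarrow$ no $K_{5,2}$ in $L(\mathcal{C})$) together with the observation that the coloured bipartite graphs are exactly the coloured Levi graphs of incidence structures. Your closing remark about dropping the girth-$\ge 6$ axiom, and the tacit glossing over of biregularity, match the paper's own (somewhat informal) usage, as seen in its parenthetical ``(biregular)'' in the later Proposition on point-conic realizations.
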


By analogy we define strongly conical configuration as a conical configuration whose dual is also conical. 

\begin{proposition}
A graph $G$ is a coloured Levi graph of a \emph{combinatorial strongly} conical configuration if and only
if $G$ is bipartite and $K_{5, 2} \nsubseteq G$ and $K_{2, 5} \nsubseteq G$.
\end{proposition}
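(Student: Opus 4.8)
The plan is to deduce this characterization directly from the preceding proposition on conical configurations, together with the colour-switching description of duality. The key auxiliary observation, which I would record first, concerns how forbidden subgraphs behave under duality. Write $G'$ for $G$ with its black and white colours interchanged. In the convention fixed in the excerpt (where $K_{m,n}$ has $m$ black vertices and $n$ white vertices), a copy of $K_{5,2}$ in $G'$ is literally the same subgraph as a copy of $K_{2,5}$ in $G$. Hence
\[
K_{5,2} \nsubseteq G' \iff K_{2,5} \nsubseteq G.
\]
Since passing to the dual configuration corresponds exactly to this colour switch, we have $L(\mathcal{C}^*) = G'$ whenever $G = L(\mathcal{C})$.

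For the forward direction, suppose $G = L(\mathcal{C})$ for some strongly conical $\mathcal{C}$. Being a Levi graph, $G$ is bipartite. As $\mathcal{C}$ is conical, the earlier proposition yields $K_{5,2} \nsubseteq G$. As $\mathcal{C}^*$ is also conical and $L(\mathcal{C}^*) = G'$, the same proposition gives $K_{5,2} \nsubseteq G'$, which by the observation above is $K_{2,5} \nsubseteq G$. All three stated conditions thus hold.

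For the converse, assume $G$ is bipartite with $K_{5,2} \nsubseteq G$ and $K_{2,5} \nsubseteq G$. By the earlier proposition, $G$ is the coloured Levi graph of some combinatorial conical configuration $\mathcal{C}$. Its dual $\mathcal{C}^*$ has coloured Levi graph $G'$, which is bipartite and, by the observation, satisfies $K_{5,2} \nsubseteq G'$; hence $\mathcal{C}^*$ is conical as well, so $\mathcal{C}$ is strongly conical.

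I do not anticipate a genuine obstacle, as the argument is a formal consequence of the conical characterization and the colour-switch description of duality. The single point requiring care is the orientation convention for $K_{m,n}$---which side is black---because the entire distinction between $K_{5,2}$ and $K_{2,5}$ rests on it; the excerpt fixes this convention explicitly, so one need only apply it consistently.
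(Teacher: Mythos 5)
Your argument is correct and matches what the paper intends: the paper states this proposition without proof, treating it as an immediate consequence of the preceding characterization of conical configurations together with the colour-switch description of duality and the stated convention that $K_{m,n}$ has $m$ black and $n$ white vertices. Your explicit handling of the $K_{5,2}$ versus $K_{2,5}$ orientation under colour switching is exactly the point the paper leaves implicit.
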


As we mentioned above, the problems of determining which lineal and circular configurations are realizable seem to be
very difficult. For certain conical configurations almost everything goes.

\begin{proposition} \label{prop:PointConic}
Every (biregular) bipartite graph which does not contain $K_{5, 2}$ and whose white vertices have degree at most five 
is a Levi graph of some point-conic configuration.
\end{proposition}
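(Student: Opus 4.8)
The plan is to realize the given graph $G$ by first placing its black vertices as points in the real projective plane in sufficiently general position, and then fitting a conic to each white vertex through exactly its neighbours. Write $p$ for the number of black vertices and, for a white vertex $w$, let $S_w$ denote its set of black neighbours, so that $|S_w| \le 5$ by hypothesis. I would begin by choosing the points $P_1,\dots,P_p$ so that no three are collinear and no six lie on a common conic; both are proper algebraic conditions, so a generic placement satisfies them. The first condition guarantees, via Theorem~\ref{FivePointsThm}, that every five of the points determine a unique conic, and moreover that this conic is non-degenerate, since a degenerate conic is a union of lines and would by the pigeonhole principle force three of the five points onto a single line.

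The white vertices then split into two cases. If $|S_w|=5$, the conic $\mathcal{C}_w:=\mathcal{C}(S_w)$ is forced. The ``no six on a conic'' condition ensures that $\mathcal{C}_w$ passes through no black point outside $S_w$, so it produces exactly the prescribed incidences and no spurious ones. To see that distinct degree-$5$ white vertices $w_1,w_2$ yield distinct conics, note that the absence of $K_{5,2}$ means $|S_{w_1}\cap S_{w_2}|\le 4$, hence $|S_{w_1}\cup S_{w_2}|\ge 6$; were the two conics equal, these six or more points would lie on a common conic, contradicting general position (and contradicting Corollary~\ref{cor:intersect5} as well). This is precisely the step in which the $K_{5,2}$-freeness is indispensable.

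If $|S_w|=d\le 4$, the conic is not forced: the conics through the $d$ points of $S_w$ form a linear system of dimension $5-d\ge 1$ inside the $\mathbb{P}^5$ of all conics. Within this positive-dimensional family I would select $\mathcal{C}_w$ so as to avoid three kinds of bad members, namely those passing through some black point outside $S_w$ (each such point cuts out a proper subspace of the system), those equal to one of the finitely many conics already chosen (each a single point of the system), and the degenerate ones (the intersection of the system with the discriminant hypersurface, which is proper because completing $S_w$ to five points in general position exhibits a non-degenerate member of the family). Since each bad locus is a proper subvariety of a projective space of dimension at least one, their finite union cannot cover the system; choosing the white vertices one after another keeps the collection of conics to be avoided finite at every step.

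The step I expect to be the main obstacle is this last one: guaranteeing that the admissible locus is nonempty \emph{over the reals}, and not merely Zariski-generic over $\mathbb{C}$. The resolution is that each forbidden condition is a proper real-algebraic subset --- indeed a proper linear subspace, except for degeneracy --- of the real projective space $\mathbb{P}^{5-d}(\mathbb{R})$ underlying the linear system, and finitely many such subsets of a manifold of positive dimension have measure zero, so their complement is nonempty and in fact dense; any point of it supplies a real non-degenerate conic with exactly the required incidences. One should also verify that the genericity conditions imposed on the points are mutually compatible, being finitely many proper conditions, and observe that the black vertices, whose degrees are unconstrained, cause no difficulty, since arbitrarily many conics may pass through a single point. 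This asymmetry is exactly why only the degree bound of five on the white side is needed.
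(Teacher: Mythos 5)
Your proof is correct and follows the same route as the paper, which disposes of the proposition in a single sentence by invoking the fact that $m$ points can be placed so that no six lie on a common conic. Your write-up supplies the details the paper omits --- the role of $K_{5,2}$-freeness in keeping the degree-$5$ conics distinct, the generic choice within the linear system for white vertices of degree at most four, and the verification that the generic choice can be made over the reals --- all of which are sound.
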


The proof simply follows from the well-known fact that $m$ points can be placed in the plane in such a way that no 6 lie on a common conic.

\begin{corollary}
Every $4$-valent (biregular) bipartite graph is a coloured Levi graph of some point-conic configuration.
\end{corollary}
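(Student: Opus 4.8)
The plan is to deduce the corollary directly from Proposition~\ref{prop:PointConic}. That proposition already produces a point-conic realization for any biregular bipartite graph that (i) contains no $K_{5,2}$ subgraph and (ii) has all white vertices of degree at most five. Thus the entire task reduces to checking that a $4$-valent biregular bipartite graph $G$ automatically satisfies these two hypotheses, after which the realization is handed to us for free.

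The degree hypothesis is immediate: since $G$ is $4$-valent, every white vertex has degree exactly four, hence at most five, so (ii) holds. For (i), I would invoke the colour convention fixed earlier in the paper, under which a $K_{5,2}$ subgraph consists of five \emph{black} vertices together with two \emph{white} vertices, each white vertex being joined to all five black ones. In such a subgraph each of the two white vertices has degree five. But in $G$ every white vertex has degree four, so no white vertex can be adjacent to five distinct black vertices; therefore $G$ contains no $K_{5,2}$, and (i) holds as well.

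With both hypotheses verified, Proposition~\ref{prop:PointConic} applies and yields a point-conic configuration whose coloured Levi graph is $G$, as desired. The argument is essentially a short bookkeeping check, so I do not anticipate a genuine obstacle; the one point that must be handled carefully is the orientation of the forbidden subgraph relative to the colour convention. It is essential that the two degree-five vertices of $K_{5,2}$ are the \emph{white} ones, since it is exactly the white valence being four (rather than five) that rules the subgraph out. Indeed, if one hypothesised only that the \emph{black} vertices were $4$-valent, the white vertices could have larger degree and the reduction would fail; so it is precisely because $4$-valence pins down the white degree that the corollary goes through.
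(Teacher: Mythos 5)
Your proof is correct and follows the same route as the paper: both deduce the corollary from Proposition~\ref{prop:PointConic}, and your explicit check that $4$-valence forces white degree $4\le 5$ and excludes any $K_{5,2}$ (whose two white vertices would need degree five) is exactly the verification the paper leaves implicit. The paper's proof additionally remarks that swapping the colours realizes the dual as well, so the configuration is in fact strongly conical, but that is extra information beyond the stated corollary.
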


\begin{proof}
By Proposition~\ref{prop:PointConic}, this graph admits a point-conic realization. 
By interchanging the vertex colours, the same result follows for the dual configuration.
Hence the configuration is strongly conical.
\end{proof}

\section{Isometric point-ellipse configurations}.

Extending the definition of isometric-point-line configurations, one may define a point-ellipse or point-conic configuration 
to be \emph{isometric} if all ellipses, or respectively conics, are congruent. We may define a point-conic configuration to 
be \emph{strongly isometric} if each congruence is obtained by translations only.

We give a simple construction for an infinite series of isometric point-ellipse configuration.

\begin{example}
Take two intersecting unit line segments. Thicken each segment in the same way to produce ellipses intersecting in four points
giving rise to a $(4_2,2_4)$ isometric point-ellipse configuration.

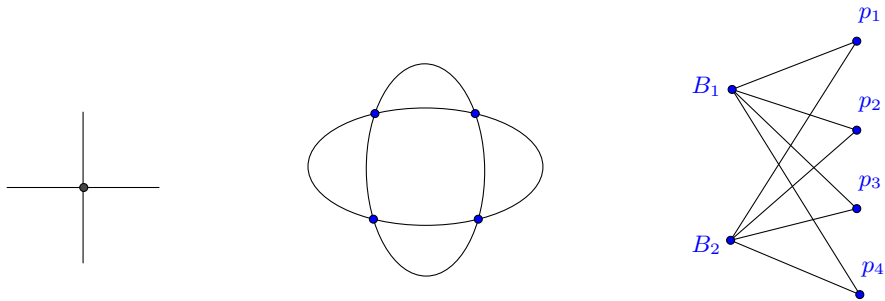
\begin{figure}[!htbp]
\centering
\definecolor{uuuuuu}{rgb}{0.26666666666666666,0.26666666666666666,0.26666666666666666}
\definecolor{qqqqff}{rgb}{0.0,0.0,1.0}
\begin{tikzpicture}[line cap=round,line join=round,>=triangle 45,x=1.0cm,y=1.0cm]
\clip(-3.24,0.2) rectangle (10.379999999999992,4.88);
\draw ({-1.6 - 1},2.36)-- ({-1.6 + 1},2.36);
\draw (-1.6,{2.36 - 1})-- (-1.6,{2.36 + 1});
\draw (6.94,3.66)-- (8.58,4.3);
\draw (6.94,3.66)-- (8.58,3.12);
\draw (6.94,3.66)-- (8.58,2.08);
\draw (6.94,3.66)-- (8.62,0.94);
\draw (6.92,1.66)-- (8.58,4.3);
\draw (6.92,1.66)-- (8.58,3.12);
\draw (8.58,2.08)-- (6.92,1.66);
\draw (8.62,0.94)-- (6.92,1.66);
\draw [rotate around={90.29896872358867:(14.571944781281413,2.635378436068815)}] (14.571944781281413,2.635378436068815) ellipse (1.1847581642989378cm and 0.785180526452594cm);
\draw [rotate around={179.40422307232993:(14.584335226410088,2.6781228957575722)}] (14.584335226410088,2.6781228957575722) ellipse (1.290980883492853cm and 0.7197088367000511cm);
\draw [rotate around={-0.1389781453350045:(2.9050261913503332,2.6363332109534587)}] (2.9050261913503332,2.6363332109534587) ellipse (1.5455854744905353cm and 0.7781798295424486cm);
\draw [rotate around={90.58994511638025:(2.9053372724229765,2.5927818607832562)}] (2.9053372724229765,2.5927818607832562) ellipse (1.4072956668433614cm and 0.7788386073131763cm);
\begin{scriptsize}
\draw [fill=qqqqff] (6.94,3.66) circle (1.5pt);
\draw[color=qqqqff] (6.6,3.7) node {$B_1$};
\draw [fill=qqqqff] (6.92,1.66) circle (1.5pt);
\draw[color=qqqqff] (6.6,1.6) node {$B_2$};
\draw [fill=qqqqff] (8.58,4.3) circle (1.5pt);
\draw[color=qqqqff] (8.759999999999993,4.64) node {$p_1$};
\draw [fill=qqqqff] (8.58,3.12) circle (1.5pt);
\draw[color=qqqqff] (8.759999999999993,3.4600000000000004) node {$p_2$};
\draw [fill=qqqqff] (8.58,2.08) circle (1.5pt);
\draw[color=qqqqff] (8.759999999999993,2.4200000000000004) node {$p_3$};
\draw [fill=qqqqff] (8.62,0.94) circle (1.5pt);
\draw[color=qqqqff] (8.799999999999994,1.28) node {$p_4$};
\draw [fill=uuuuuu] (-1.5911363636363636,2.36) circle (1.5pt);
\draw [fill=qqqqff] (13.92,3.3) circle (1.5pt);
\draw [fill=qqqqff] (15.22,3.3) circle (1.5pt);
\draw [fill=qqqqff] (15.26,2.06) circle (1.5pt);
\draw [fill=qqqqff] (13.88,2.08) circle (1.5pt);
\draw [fill=qqqqff] (2.24,3.34) circle (1.5pt);
\draw [fill=qqqqff] (3.56,3.34) circle (1.5pt);
\draw [fill=qqqqff] (2.22,1.94) circle (1.5pt);
\draw [fill=qqqqff] (3.6,1.94) circle (1.5pt);
\end{scriptsize}
\end{tikzpicture}
\caption{Left: $(1_2,2_1)$ configuration of a point and two isometric line segments. 
Center: Isometric point-ellipse $(4_2, 2_4)$ configuration. Right: The corresponding Levi graph. }  
\end{figure}
\end{example}

By using Example 1, we give a simple construction for an infinite series of isometric point-ellipse configurations.
The first member of this series is depicted in Figure~\ref{fig:ExampleTwo}. We start from a regular $n$-gon with 
sides elongated slightly and uniformly so as to obtain a configuration $(n_2)$. We slightly thicken the line segments 
into isometric ellipses so as to obtain an isometric point-ellipse configuration in which every two consecutive ellipses 
intersect precisely in four points.

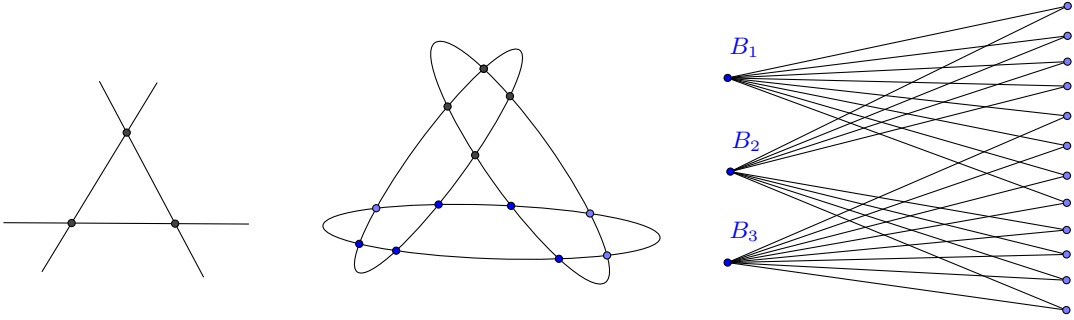
\begin{figure}[!htbp]
\label{fig:ExampleTwo}
\centering
\definecolor{uuuuuu}{rgb}{0.26666666666666666,0.26666666666666666,0.26666666666666666}
\definecolor{xdxdff}{rgb}{0.49019607843137253,0.49019607843137253,1.0}
\definecolor{qqqqff}{rgb}{0.0,0.0,1.0}
\begin{tikzpicture}[line cap=round,line join=round,>=triangle 45,x=1.0cm,y=1.0cm,scale=0.9]
\clip(0.29170000000000085,-1.2947236363636352) rectangle (16.476660000000006,3.8695563636363666);
\draw (11.04,2.46)-- (16.01128556850274,3.520037971379073);
\draw (16.009368393573343,3.0800463250824692)-- (11.04,2.46);
\draw (16.007712651588864,2.700053539644493)-- (11.04,2.46);
\draw (16.006143674205568,2.339973230177754)-- (11.04,2.46);
\draw (16.00422725870375,1.9001558725110947)-- (11.04,2.46);
\draw (16.002309324346772,1.4599899375845453)-- (11.04,2.46);
\draw (16.000391769703587,1.0199111469729694)-- (11.04,2.46);
\draw (15.998650402259297,0.6202673185086738)-- (11.04,2.46);
\draw (15.99176486223509,-0.9599641170467761)-- (11.04,-0.26);
\draw (15.993681657450697,-0.5200596150651444)-- (11.04,-0.26);
\draw (15.995337779148967,-0.13997968531219485)-- (11.04,-0.26);
\draw (15.996906376818472,0.22001347983957054)-- (11.04,-0.26);
\draw (15.998650402259297,0.6202673185086738)-- (11.04,-0.26);
\draw (16.000391769703587,1.0199111469729694)-- (11.04,-0.26);
\draw (16.002309324346772,1.4599899375845453)-- (11.04,-0.26);
\draw (16.00422725870375,1.9001558725110947)-- (11.04,-0.26);
\draw (11.08,1.08)-- (16.01128556850274,3.520037971379073);
\draw (11.08,1.08)-- (16.009368393573343,3.0800463250824692);
\draw (11.08,1.08)-- (16.007712651588864,2.700053539644493);
\draw (11.08,1.08)-- (16.006143674205568,2.339973230177754);
\draw (11.08,1.08)-- (15.996906376818472,0.22001347983957054);
\draw (15.995337779148967,-0.13997968531219485)-- (11.08,1.08);
\draw (15.993681657450697,-0.5200596150651444)-- (11.08,1.08);
\draw (15.99176486223509,-0.9599641170467761)-- (11.08,1.08);
\draw [rotate around={177.9310855946649:(7.587087662606778,0.1925009530459804)}] (7.587087662606778,0.1925009530459804) ellipse (2.465896968169812cm and 0.3953106776594227cm);
\draw [rotate around={-125.99174752138065:(6.811976569167591,1.2354387289606588)}] (6.811976569167591,1.2354387289606588) ellipse (2.0196769905877807cm and 0.39051579192640556cm);
\draw [rotate around={125.20039274088921:(8.005656236504642,1.2121208713249985)}] (8.005656236504642,1.2121208713249985) ellipse (2.161273388425279cm and 0.4853626475298307cm);
\draw (0.45686,0.32751272727273184)-- (4.03686,0.30751272727273227);
\draw (1.01686,-0.39248727272726613)-- (2.69686,2.3875127272727337);
\draw (1.85686,2.4075127272727332)-- (3.37686,-0.4724872727272662);
\begin{scriptsize}
\draw [fill=qqqqff] (11.04,2.46) circle (1.5pt);
\draw[color=qqqqff] (11.285760000000005,2.911236363636366) node {$B_1$};
\draw [fill=qqqqff] (11.08,1.08) circle (1.5pt);
\draw[color=qqqqff] (11.312380000000005,1.5269963636363657) node {$B_2$};
\draw [fill=qqqqff] (11.04,-0.26) circle (1.5pt);
\draw[color=qqqqff] (11.285760000000005,0.19599636363636536) node {$B_3$};
\draw [fill=xdxdff] (16.01128556850274,3.520037971379073) circle (1.5pt);
\draw [fill=xdxdff] (16.009368393573343,3.0800463250824692) circle (1.5pt);
\draw [fill=xdxdff] (16.007712651588864,2.700053539644493) circle (1.5pt);
\draw [fill=xdxdff] (16.006143674205568,2.339973230177754) circle (1.5pt);
\draw [fill=xdxdff] (16.00422725870375,1.9001558725110947) circle (1.5pt);
\draw [fill=xdxdff] (16.002309324346772,1.4599899375845453) circle (1.5pt);
\draw [fill=xdxdff] (16.000391769703587,1.0199111469729694) circle (1.5pt);
\draw [fill=xdxdff] (15.998650402259297,0.6202673185086738) circle (1.5pt);
\draw [fill=xdxdff] (15.996906376818472,0.22001347983957054) circle (1.5pt);
\draw [fill=xdxdff] (15.995337779148967,-0.13997968531219485) circle (1.5pt);
\draw [fill=xdxdff] (15.993681657450697,-0.5200596150651444) circle (1.5pt);
\draw [fill=xdxdff] (15.99176486223509,-0.9599641170467761) circle (1.5pt);
\draw [fill=qqqqff] (6.8136,0.5952963636363655) circle (1.5pt);
\draw [fill=qqqqff] (7.8736,0.5752963636363655) circle (1.5pt);
\draw [fill=qqqqff] (6.1936,-0.08470363636363454) circle (1.5pt);
\draw [fill=qqqqff] (8.5736,-0.20470363636363453) circle (1.5pt);
\draw [fill=qqqqff] (5.6536,0.01529636363636544) circle (1.5pt);
\draw [fill=xdxdff] (5.901947778700034,0.5404351453540278) circle (1.5pt);
\draw [fill=xdxdff] (9.028027064274013,0.4626805984389878) circle (1.5pt);
\draw [fill=xdxdff] (9.277100897355599,-0.1548696672118616) circle (1.5pt);
\draw [fill=qqqqff] (7.4736,2.5952963636363666) circle (1.5pt);
\draw [fill=uuuuuu] (7.4736,2.5952963636363666) circle (1.5pt);
\draw [fill=uuuuuu] (7.85686711560289,2.191088835558806) circle (1.5pt);
\draw [fill=uuuuuu] (7.348462934635833,1.3217478796497115) circle (1.5pt);
\draw [fill=uuuuuu] (6.945274571851051,2.038769798474687) circle (1.5pt);
\draw [fill=uuuuuu] (1.4486196635289397,0.3219721704932406) circle (1.5pt);
\draw [fill=uuuuuu] (2.9620242023346313,0.3135173965334325) circle (1.5pt);
\draw [fill=uuuuuu] (2.254099187996469,1.6548490026478435) circle (1.5pt);
\end{scriptsize}
\end{tikzpicture}
\caption{Left: $(3_2)$ configuration of three points and three isometric line segments. 
Center: Isometric point-ellipse $(12_2, 3_8)$ configuration. Right: The corresponding Levi graph. }
\end{figure}

This verifies the following proposition. 

\begin{proposition}
For any natural number $n\geq 3$ there exists an isometric point-ellipse $(4n_2,n_8)$ configuration.
\end{proposition}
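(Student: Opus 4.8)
The plan is to turn the abstract $(4n_2, n_8)$ incidence structure into the concrete metric picture sketched just above the statement, reducing the only delicate point to Example~1.

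First I would fix the skeleton. Take a regular $n$-gon with vertices $v_1,\dots,v_n$ and sides $s_1,\dots,s_n$, where $s_i$ joins $v_i$ to $v_{i+1}$ (indices read modulo $n$). The vertices together with the lines carrying the sides form the balanced configuration $(n_2)$: each vertex $v_i$ lies on exactly the two sides $s_{i-1}$ and $s_i$, and each side carries exactly two vertices. Since consecutive sides meet only at the shared endpoint $v_i$ rather than crossing there, I would elongate every side by the same small amount beyond each endpoint while keeping its midpoint fixed. This preserves the dihedral symmetry of the $n$-gon and, crucially, replaces each ``corner'' at $v_i$ by a genuine transversal crossing of the two segments carrying $s_{i-1}$ and $s_i$.

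Next I would thicken. Replace each elongated segment by a thin ellipse $E_i$ whose major axis lies along $s_i$, all of the $E_i$ being images of one another under the rotation subgroup of the symmetry group; this makes the family $\{E_1,\dots,E_n\}$ isometric. By the local analysis of Example~1, two congruent thin ellipses whose axes cross transversally meet in exactly four real points. Hence, near each $v_i$, where the axes of $E_{i-1}$ and $E_i$ cross, we obtain four points lying on both $E_{i-1}$ and $E_i$.

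What remains---and this is the only place that needs genuine care---is to count these points and to exclude unintended incidences. I would fix the elongation length and the minor axis small enough that: non-consecutive ellipses $E_i$ and $E_j$ (with $j \notin \{i-1,i,i+1\}$ mod $n$) stay disjoint, since their axes are bounded away from one another; the four-point clusters produced at distinct crossings are pairwise disjoint; and, consequently, no point lies on three ellipses. Such a choice exists by a routine continuity argument, since all the relevant distances are strictly positive in the limit of zero thickness except exactly at the intended crossings, and vary continuously with the two parameters. Granting this, the incidences are forced: each $E_i$ meets precisely its neighbours $E_{i-1}$ and $E_{i+1}$, contributing $4+4=8$ points, so every ellipse carries eight points; each point lies on exactly the two ellipses crossing there, so every point has valency two; and there are $n$ crossings with four points each, giving $4n$ points in total. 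This produces the desired isometric point-ellipse $(4n_2, n_8)$ configuration for every $n\ge 3$, the hypothesis $n\ge 3$ being exactly what guarantees the $n$-gon exists. The main obstacle is making ``thin and slightly elongated'' quantitatively precise, so that the only intersection points are the $4n$ intended ones.
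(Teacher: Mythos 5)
Your proposal is correct and follows essentially the same route as the paper: elongate the sides of a regular $n$-gon to get an $(n_2)$ configuration of points and segments, then thicken the segments into congruent thin ellipses so that consecutive ellipses meet in exactly four points, yielding $4n$ points of valency $2$ and $n$ ellipses of valency $8$. Your added care in ruling out unintended incidences (non-consecutive ellipses staying disjoint, no point on three ellipses) is a detail the paper leaves implicit, but the construction is the same.
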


Note that the isometric ellipses used in the construction above are not parallel. 
In the case of strongly isometric point-ellipse configurations any two ellipses 
meet in 0, 1 or 2 point.

\begin{theorem}
The classes of strongly isometric point-ellipse configurations and isometric point-circle configurations coincide.
\end{theorem}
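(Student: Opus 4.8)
The plan is to prove the asserted equality of classes by establishing the two inclusions separately, reading ``coincide'' as: a combinatorial configuration admits a strongly isometric point-ellipse realization if and only if it admits an isometric point-circle realization.

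The easy inclusion is that every isometric point-circle configuration is already a strongly isometric point-ellipse configuration. Here I would simply note that a circle is a special ellipse, and that any two circles of equal radius are translates of one another. Thus an isometric point-circle configuration is a point-ellipse configuration all of whose ellipses are translates of a single one, which is precisely the definition of strong isometry; no points or incidences are altered.

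For the reverse inclusion I would use a single linear transformation. In a strongly isometric point-ellipse configuration every congruence relating two ellipses is a translation, so all ellipses are translates of one fixed ellipse $E$; in particular they share not only size and shape but also orientation. Writing $E = E_0 + c$ with $E_0$ centred at the origin, there is an invertible linear map $A$ carrying $E_0$ to a circle $C_0$ centred at the origin, since every centred ellipse is the image of a circle under an invertible linear map. Applying $T = A$ to the whole configuration, each ellipse, being of the form $E + v_i = E_0 + (c + v_i)$, maps to $A(E_0) + A(c + v_i) = C_0 + A(c + v_i)$, a circle of the same radius as $C_0$. Hence all image conics are circles of one common radius. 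Because $T$ is a bijection of the plane that sends conics to conics and points to points, it preserves incidences in both directions; distinct objects stay distinct and no incidence is created or destroyed. The image is therefore an isometric point-circle configuration realizing the same combinatorial structure, completing the equivalence.

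The crux lies in the reverse direction, and specifically in using the strong-isometry hypothesis in full force: the ellipses must be genuine translates, sharing orientation and not merely being congruent, so that the \emph{one} circularizing linear map $A$ acts uniformly on all of them at once. If the ellipses were only isometric but variously oriented --- as in the merely isometric families constructed earlier in this section, whose ellipses meet in up to four points --- no single affine map could simultaneously turn them into circles, which explains why the plain isometric class is strictly larger. By contrast, verifying that an affine bijection preserves the incidence structure and carries an ellipse to a circle is entirely routine, consistent with the earlier observation that two translates of a common ellipse already meet in at most two points, just like circles.
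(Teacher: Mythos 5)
Your proof is correct and follows essentially the same route as the paper's: the paper also applies a single affine stretch (``an appropriate dilation along an appropriate direction'') to the whole plane to turn all the mutually translated ellipses into equal circles while preserving incidences. You are somewhat more explicit than the paper in spelling out the trivial converse inclusion and in isolating exactly where the strong-isometry (translation-only) hypothesis is used, but the underlying idea is identical.
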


\begin{proof}
By an appropriate dilation along an appropriate direction an ellipse turns into a circle. 
This transformation performed on the whole plane transforms all ellipses into circles 
and preserves the incidences. 
\end{proof}

\section{Intersection types}
\begin{definition}
Consider a non-disjoint pair of blocks $(B_1, B_2)$ in a configuration $\mathcal C$. We call the number of configuration 
points in which $B_1$ and $B_2$ meet the \emph{intersection type} of $(B_1, B_2)$. Let $\beta(\mathcal C)$ be the 
set consisting of those positive integers which occur as intersection type of two non-disjoint blocks of $\mathcal C$. We 
call this set the \emph{intersection type} of $\mathcal C$.
\end{definition}

\noindent
The following corollary is straightforward.
\begin{corollary}
There are 15 distinct intersection types to which a conical configuration can belong.
\end{corollary}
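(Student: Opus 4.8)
The plan is to reduce the statement to the elementary fact that, in a conical configuration, a single non-disjoint block pair admits only four possible intersection types, so that $\beta(\mathcal{C})$ is forced to be a subset of a fixed four-element set, and then to count.

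First I would pin down the range of the intersection type of one pair. By definition the intersection type is assigned only to a non-disjoint pair $(B_1,B_2)$, so $B_1$ and $B_2$ share at least one configuration point and the type is at least $1$. For the upper bound I would invoke conicality: a configuration is conical exactly when its coloured Levi graph omits $K_{5,2}$, the combinatorial shadow of Corollary~\ref{cor:intersect5}, so that any two distinct (combinatorial) conics meet in at most four points and the type is at most $4$. Hence every occurring intersection type lies in $\{1,2,3,4\}$, and therefore $\beta(\mathcal{C}) \subseteq \{1,2,3,4\}$.

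Next I would rule out the empty set. In a configuration every point is incident with $q \ge 2$ blocks, so choosing any point yields two distinct blocks meeting in it; this pair is non-disjoint and contributes at least one value, whence $\beta(\mathcal{C})$ is a \emph{non-empty} subset of $\{1,2,3,4\}$. The non-empty subsets of a four-element set number $2^4 - 1 = 15$, which is precisely the claimed count. The only genuinely substantive point here is this exclusion of the empty set—i.e.\ explaining why the answer is $15$ and not $2^4 = 16$—everything else being a direct reading of the definitions.

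Finally, if one wants the count to be tight (each of the fifteen types actually occurring), it remains to exhibit for every non-empty $S \subseteq \{1,2,3,4\}$ a conical configuration with $\beta(\mathcal{C}) = S$. I expect this to be routine: one overlays small block pairs realising each prescribed value $t \in S$ (two blocks sharing exactly $t$ points) and checks that no $K_{5,2}$ is thereby created, the realizability of the resulting $K_{5,2}$-free biregular incidence pattern being guaranteed by Proposition~\ref{prop:PointConic}.
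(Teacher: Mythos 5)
Your proposal is correct and is exactly the ``straightforward'' argument the paper has in mind (the paper offers no written proof): conicality bounds each pairwise intersection type by $4$, so $\beta(\mathcal{C})$ is a non-empty subset of $\{1,2,3,4\}$, giving $2^4-1=15$ possibilities. Your closing remark on realizing all fifteen sets goes beyond what the corollary claims, but the sketch via Proposition~\ref{prop:PointConic} is reasonable.
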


In what follow we construct an example whose intersection type is $\{1, 4\}$.

\begin{lemma} \label{EllipsesLem}
Choose a point on each side of a parallelogram $\mathcal P = ABCD$ such that they are in symmetric position
with respect to the centre of $\mathcal P$ and do not coincide with any of the vertices of $\mathcal P$. Then 
there are two uniquely determined ellipses passing through these points, moreover through $A,C$ and through
$B,D$, respectively.
\end{lemma}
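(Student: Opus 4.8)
The plan is to normalise by an affine map and exploit the central symmetry to cut the problem down to a $3\times 3$ linear system. Since affine transformations send ellipses to ellipses, parallelograms to parallelograms, and preserve midpoints, central symmetry and the incidence of a point with a side, I may assume the centre of $\mathcal P$ is the origin and, in a suitable affine frame, $A=(1,0)$, $B=(0,1)$, $C=(-1,0)$, $D=(0,-1)$. Central symmetry then gives $C=-A$ and $D=-B$, and the four chosen side-points become a centrally symmetric quadruple $P=(1-t,t)$ on $AB$, $Q=(-s,1-s)$ on $BC$, together with $-P$ on $CD$ and $-Q$ on $DA$, where $s,t\in(0,1)$ because the points avoid the vertices.

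The key structural observation is that the six (distinct) points through which the first ellipse must pass, namely $A,C,P,-P,Q,-Q$, form three centrally symmetric pairs. Hence I look for a \emph{central} conic $\mathbf x^{\mathsf T}M\mathbf x=1$ with $M=\bigl(\begin{smallmatrix}\alpha&\beta\\\beta&\gamma\end{smallmatrix}\bigr)$ and no linear part; for such a conic passing through a point is equivalent to passing through its antipode, so the six incidences collapse to the three conditions coming from $A$, $P$ and $Q$. These are three linear equations in $\alpha,\beta,\gamma$. The incidence at $A$ forces $\alpha=1$, and eliminating between the two remaining equations I expect, after a short computation, the clean solution $\beta=(1-s-t)/D_0$ and $\gamma=(1-t+3s-2st)/D_0$, with $D_0=1-s-t+2st$. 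A direct check gives $D_0>0$ for all $s,t\in(0,1)$, so the system is nonsingular and the central conic exists and is unique.

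The crux is to show that this conic is genuinely an ellipse, i.e.\ that $M$ is positive definite. Since $\alpha=1>0$, it suffices to prove $\det M=\gamma-\beta^2>0$. Substituting the values above, the numerator of $\det M$ equals $(1-t+3s-2st)D_0-(1-s-t)^2$, and the decisive point is the factorisation $(1-t+3s-2st)D_0-(1-s-t)^2=4s\,(1-t)\bigl(1-s(1-t)\bigr)$; since $s,t\in(0,1)$ each factor is positive, so $\det M>0$ and $M\succ 0$. This is the step I expect to carry the real content: it is precisely the assertion that the constructed central conic is bounded rather than a hyperbola, and it uses that $P,Q,-P,-Q$ are genuine interior points of the sides.

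It remains to upgrade ``unique central conic'' to ``unique conic'' and to obtain the second ellipse. Because a nondegenerate ellipse meets any line in at most two points, the mere existence of the ellipse through the six points already forces that no three of them are collinear; hence no degenerate conic (a union of two lines) can pass through all six, and by B\'ezout two distinct nondegenerate conics meet in at most four points. As the six points would be common to any two such conics, the ellipse is the unique conic through them. Finally, the reflection $(x,y)\mapsto(y,x)$ interchanges the vertex pair $\{A,C\}$ with $\{B,D\}$ while permuting the four side-points among the four sides, so the identical argument, now with $\gamma=1$ forced by $B=(0,1)$, produces the second uniquely determined ellipse, passing through $B,D$ and the four side-points. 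The one genuine obstacle is the positivity in the third paragraph; everything else is either bookkeeping or a standard B\'ezout count.
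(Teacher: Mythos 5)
Your proof is correct, but it takes a genuinely different route from the paper's. The paper argues synthetically: by the five-point theorem the quadruple of side-points together with $A$ determines a unique conic, and since the quadruple is centrally symmetric about the centre of $\mathcal P$ (as is any ellipse and any parallelogram), this conic must also pass through the opposite vertex $C$. You instead normalise affinely, write down the central conic $\mathbf x^{\mathsf T}M\mathbf x=1$ explicitly, solve a $2\times 2$ linear system (your values $D_0=1-s-t+2st=(1-s)(1-t)+st>0$, $\beta=(1-s-t)/D_0$, $\gamma=(1-t+3s-2st)/D_0$ and the factorisation of the numerator of $\det M$ as $4s(1-t)\bigl(1-s(1-t)\bigr)$ all check out), and then get uniqueness among all conics via B\'ezout. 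What your approach buys is precisely the point the paper's two-line proof glosses over: that the conic through the five points is actually an \emph{ellipse} rather than a hyperbola or parabola, which is exactly your positive-definiteness computation, and it also delivers the ``no three collinear'' hypothesis of the five-point theorem a posteriori rather than assuming it. What the paper's argument buys is brevity and a conceptual explanation (central symmetry) for why the sixth point comes for free. One presentational caveat in your write-up: the reflection $(x,y)\mapsto(y,x)$ does \emph{not} fix the chosen quadruple of side-points -- it sends it to a different centrally symmetric quadruple -- so the ``by symmetry'' shortcut for the second ellipse needs either the remark that your argument is uniform over all admissible $(s,t)$, or simply the rerun of the computation with $\gamma=1$ (which works, the determinant numerator becoming $4t(1-s)\bigl(1-t(1-s)\bigr)$ by the evident $s\leftrightarrow t$ symmetry). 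This is a matter of wording, not a gap.
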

\begin{proof}
It is sufficient to consider only one of the ellipses. By Theorem~\ref{FivePointsThm}, the quadruple of points 
chosen on the sides of $\mathcal P$, together with $A$, determines a unique ellipse $\mathcal E$. By condition, 
this quadruple is a centrally symmetric figure, and is in concentric position with respect to $\mathcal P$. Since 
any ellipse is also a centrally symmetric figure (and so is a parallelogram), it follows that $\mathcal E$ must pass 
through the vertex $C$, too (cf.\ Figure~\ref{parallelogram}).
\end{proof}
\begin{figure}[!h]
  \centering
    \includegraphics[width=0.4\textwidth]{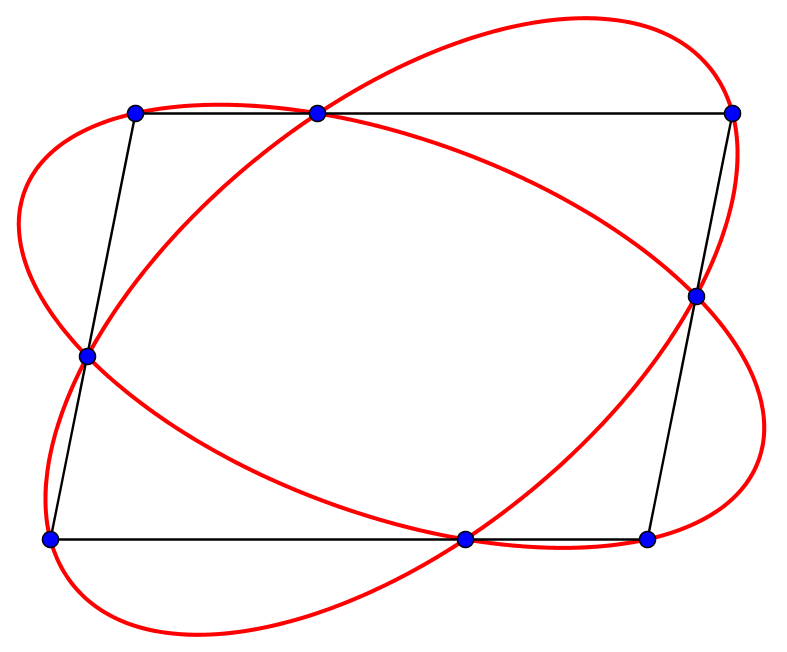}
  \caption{A pair of ellipses drawn on a parallelogram in concentric position.}
  \label{parallelogram}
\end{figure}

Start from the image of the 4-cube $Q_4$ in the plane under parallel projection. Denote this image by $Q'_4$.
The 24 square 2-faces of $Q_4$ are projected, in general, into parallelograms. Hence, taking the ellipses for all
these parallelograms as described in Lemma~\ref{EllipsesLem}, we obtain altogether 48 ellipses. Since $Q_4$
has 16 vertices and 32 edges, there are altogether 48 points which are incident with the ellipses. 

Moreover, we know that in $Q_4$, each edge is incident with precisely three 2-faces. Hence each of the 32 points
on the edges of $Q'_4$ is incident with 6 ellipses. On the other hand, each vertex of $Q_4$ is incident with 6 2-faces;
hence we have 16 additional points in $Q'_4$ such that each of them is incident with 6 ellipses, too. It follows that
we have a configuration of type $(48_6)$. 

It can directly be seen from the construction that intersecting pairs of ellipses belong to two types: pairs of one 
type have four configuration points in common (these are precisely those which belong to the same parallelogram), 
and pairs of the other type have a single configuration point in common (those which intersect in a vertex of $Q'_4$).
Hence the intersection type of this configuration is $\{1,4\}$.

\section{Carnot configurations} \label{CC}

We denote by $XY$ the \emph{signed distance} of points $X, Y$ of the Euclidean plane. This means that the 
line $\overline {XY}$ is supposed to be a directed line, and $XY = d(X, Y)$ or $XY = - d(X, Y)$ depending on
the direction of the vector $\overrightarrow {XY}$.

The following theorem is a generalization of the theorem of Menelaos in elementary geometry.
\begin{theorem}[Carnot's Theorem]
Let $ABC$ a triangle and let $A_1, A_2$ be points on the line $\overline {BC}$, $B_1, B_2$ on the line 
$\overline {CA}$ and $C_1$ and $C_2$ on the line $\overline {AB}$. The points $A_1, A_2, B_1, B_2, 
C_1$ and $C_2$ (all of which are supposed to be different from the vertices of the triangle) lie on the 
same conic if and only if
\begin{equation}
\frac{AC_1}{C_1B} \cdot \frac{AC_2}{C_2B} \cdot \frac{BA_1}{A_1C} \cdot \frac{BA_2}{A_2C} \cdot
\frac{CB_1}{B_1A} \cdot \frac{CB_2}{B_2A} = 1.
\end{equation}
\end{theorem}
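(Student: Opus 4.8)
The plan is to realise the conic as the zero set of a quadratic polynomial and to collapse the six signed ratios into a telescoping product of values of that polynomial at the three vertices. Write the conic as $S=0$, where $S(x,y)=ax^2+bxy+cy^2+dx+ey+f$ is a non-degenerate quadratic in affine coordinates, and suppose first that all six points lie on it. The key is the single per-side identity
\begin{equation}
\frac{AC_1}{C_1B}\cdot\frac{AC_2}{C_2B}=\frac{S(A)}{S(B)},
\label{eq:perside}
\end{equation}
together with its two cyclic analogues on the sides $\overline{BC}$ and $\overline{CA}$. Granting these, the forward direction is immediate: multiplying the three identities gives
\[
\frac{AC_1}{C_1B}\cdot\frac{AC_2}{C_2B}\cdot\frac{BA_1}{A_1C}\cdot\frac{BA_2}{A_2C}\cdot\frac{CB_1}{B_1A}\cdot\frac{CB_2}{B_2A}=\frac{S(A)}{S(B)}\cdot\frac{S(B)}{S(C)}\cdot\frac{S(C)}{S(A)}=1,
\]
the vertex factors cancelling telescopically. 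Note that $S(A),S(B),S(C)\neq 0$: were a vertex on the conic, the conic would meet one of the triangle's sides in three distinct points and hence be degenerate, contradicting Corollary~\ref{cor:intersect5}.

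To prove \eqref{eq:perside} I would parametrise the line $\overline{AB}$ by $P(t)=(1-t)A+tB$, so that $P(0)=A$, $P(1)=B$, and the signed ratio of $P(t)$ is $AP/PB=t/(1-t)$. Substituting $P(t)$ into $S$ yields a univariate quadratic $q(t)=\alpha t^2+\beta t+\gamma$ whose roots $t_1,t_2$ are the parameters of $C_1,C_2$. Since $q(0)=S(A)$ and $q(1)=S(B)$, Vieta's formulas give $t_1t_2=\gamma/\alpha$ and $(1-t_1)(1-t_2)=(\alpha+\beta+\gamma)/\alpha$, whence
\[
\frac{AC_1}{C_1B}\cdot\frac{AC_2}{C_2B}=\frac{t_1t_2}{(1-t_1)(1-t_2)}=\frac{\gamma}{\alpha+\beta+\gamma}=\frac{q(0)}{q(1)}=\frac{S(A)}{S(B)}.
\]
The identical computation on $\overline{BC}$ and $\overline{CA}$ supplies the remaining two identities.

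For the converse I would invoke Theorem~\ref{FivePointsThm}. Assuming the six points are in general position, choose five of them, say $A_1,A_2,B_1,B_2,C_1$; no three are collinear, so they determine a unique conic $\mathcal{C}'$ with equation $S'=0$. Restricting $S'$ to $\overline{AB}$ gives a real quadratic with the real root $C_1$, so its other root is also real and defines the second intersection $C_2'$ of $\mathcal{C}'$ with $\overline{AB}$. Applying the already-proved forward direction to $\mathcal{C}'$ yields the product identity with $C_2'$ in place of $C_2$; comparing this with the hypothesis that the product equals $1$ forces $AC_2/C_2B=AC_2'/C_2'B$. As the signed ratio $t\mapsto t/(1-t)$ is injective, a point of $\overline{AB}$ is pinned down by its signed ratio, so $C_2=C_2'$ and all six points lie on $\mathcal{C}'$.

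I expect the main obstacle to be the bookkeeping of signs: identity \eqref{eq:perside} holds only for signed ratios, and the affine parametrisation $P(t)=(1-t)A+tB$ is precisely what makes the signs cancel, whereas unsigned distances would destroy the telescoping. A secondary point needing care is the general-position hypothesis in the converse, which guarantees both that five of the points determine a unique non-degenerate conic and, via the real-root observation, that the comparison point $C_2'$ exists as a genuine real point; the remaining degenerate configurations are routine to exclude.
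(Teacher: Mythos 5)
Your argument is correct, but note that the paper does not actually prove Carnot's theorem: it states it and refers the reader to \cite{szilasi2012}, so there is no internal proof to compare against. What you have written is the standard self-contained argument, and it is sound. The per-side identity $\frac{AC_1}{C_1B}\cdot\frac{AC_2}{C_2B}=\frac{S(A)}{S(B)}$, obtained by restricting the quadratic $S$ to the affinely parametrised line and applying Vieta's formulas, is exactly the right mechanism, and the telescoping product $\frac{S(A)}{S(B)}\cdot\frac{S(B)}{S(C)}\cdot\frac{S(C)}{S(A)}=1$ disposes of the forward direction; the converse, by comparing the sixth point with the second intersection of the unique conic through the other five (via Theorem~\ref{FivePointsThm}) and using injectivity of $t\mapsto t/(1-t)$, is likewise the usual route. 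Two small points merit care. First, your justification that $S(A),S(B),S(C)\neq 0$ relies on the six points being pairwise distinct: in a tangency situation $C_1=C_2$ the side meets the conic in only two distinct points, so a vertex could lie on a non-degenerate conic and the ratio $S(A)/S(B)$ would be $0$ or undefined; under the usual reading of the statement (six distinct points) your degeneracy argument is fine. Second, in the converse the comparison point $C_2'$ can escape to infinity when the restricted quadratic drops to degree one; you flag this among the "routine" degenerate cases, which is acceptable for a sketch, though working projectively with cross-ratio--style signed ratios would eliminate the issue uniformly.
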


\noindent
(For a proof of this theorem see e.g.~\cite{szilasi2012}.) 

By Richter-Gebert, Carnot's theorem can serve as the basis to build ``cycle theorems", the simplest of which
is as follows~\cite{richter2011}.
\begin{theorem}[Richter-Gebert] \label{JRG-Thm}
Given a tetrahedron, take two distinct points on each of the lines determined by its edges such that neither of these 
points coincide with a vertex. Suppose that for three of the faces of the tetrahedron, the 6-tuples of points belonging 
to these faces are coconical. Then the last, fourth 6-tuple of points is coconical, too.  
\end{theorem}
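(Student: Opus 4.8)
The plan is to reduce the statement to a single multiplicative identity obtained by multiplying together the four Carnot conditions, one per face of the tetrahedron. Write the tetrahedron as $ABCD$; its six edge-lines $\overline{AB},\overline{AC},\overline{AD},\overline{BC},\overline{BD},\overline{CD}$ each carry two of the given points, and each of the four triangular faces collects the six points lying on its three edges. For a single face I would invoke Carnot's Theorem: the six points on that face are coconical if and only if the associated product of six signed ratios equals $1$. Denote by $\Pi_{ABC},\Pi_{ABD},\Pi_{ACD},\Pi_{BCD}$ these four Carnot products, each formed by traversing the three sides of its face in a fixed cyclic order. The whole statement follows once I establish the identity $\Pi_{ABC}\,\Pi_{ABD}\,\Pi_{ACD}\,\Pi_{BCD}=1$, valid for \emph{any} admissible choice of points, since then three of the factors being $1$ forces the fourth to be $1$, and Carnot's Theorem read in the converse direction turns that into coconicality of the remaining six points.

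To prove the identity I would track each edge-line's contribution. A point $Q$ on $\overline{XY}$ enters a Carnot product through the signed ratio $\frac{XQ}{QY}$, and swapping the two endpoints replaces this by its reciprocal, because $\frac{YQ}{QX}=\big(\frac{XQ}{QY}\big)^{-1}$ while the ratio itself is independent of a chosen direction on the line. Each edge of the tetrahedron lies on exactly two faces, so the two points on that edge contribute exactly twice to the total product. The crux is to orient the four faces coherently, i.e. to choose the cyclic orders so that they realize the boundary orientation $\partial[ABCD]=[BCD]-[ACD]+[ABD]-[ABC]$; with this choice every edge is traversed in opposite senses by its two incident faces. Consequently the two factors that a given edge contributes are mutually reciprocal and cancel, and running this over all six edges gives $\Pi_{ABC}\,\Pi_{ABD}\,\Pi_{ACD}\,\Pi_{BCD}=\prod_{e}\big(r_e\cdot r_e^{-1}\big)=1$, where $r_e$ abbreviates the product of the two signed ratios attached to edge $e$.

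With the identity in hand the conclusion is immediate: assuming the six-tuples on three faces are coconical, the three corresponding Carnot products equal $1$, whence the fourth product equals $1$ as well, and Carnot's Theorem certifies that the last six points are coconical. I expect the one genuinely delicate step to be the orientation bookkeeping of the second paragraph, namely verifying that a globally consistent choice of cyclic orders makes every shared edge appear with opposite orientations in its two faces; this is the $\partial^2=0$ phenomenon for the tetrahedral boundary sphere, and it is what makes the per-edge cancellation exact rather than merely correct up to sign. Finally, the hypotheses that the chosen points avoid the vertices and that the two points on each edge are distinct are exactly what guarantee that all signed ratios are finite and nonzero, so that no factor $r_e$ degenerates to $0$ or $\infty$ and the cancellation is legitimate.
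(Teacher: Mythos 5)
Your proof is correct. The paper itself offers no proof of this theorem---it is quoted from Richter-Gebert with a reference to \cite{richter2011}---and your argument is precisely the standard one from that source: orient the four faces coherently as the boundary of the solid tetrahedron, observe that each edge is traversed in opposite senses by its two incident faces so that the two signed ratios contributed by each point cancel (using $\frac{XQ}{QY}\cdot\frac{YQ}{QX}=1$), conclude that the product of the four Carnot expressions is identically $1$, and then apply the ``if'' direction of Carnot's theorem to the fourth face. Your orientation bookkeeping via $\partial^2=0$ is exactly the right mechanism, and it is also what powers the generalization to arbitrary oriented triangulated compact $2$-manifolds stated in Remark~\ref{TwoDimCarnot}; one could add the minor observation that since reversing a face's cyclic order merely inverts its Carnot product, the hypothesis and conclusion are insensitive to the orientation choices, so coherence is needed only for the intermediate identity, not for the statement itself.
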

Figure~\ref{RichterThmFigure} visualizes this theorem. It can directly be seen that the theorem gives rise to a
point-conic configuration of type $(12_6, 4_3)$.

\begin{figure}[!h]
  \centering
    \includegraphics[width=0.45\textwidth]{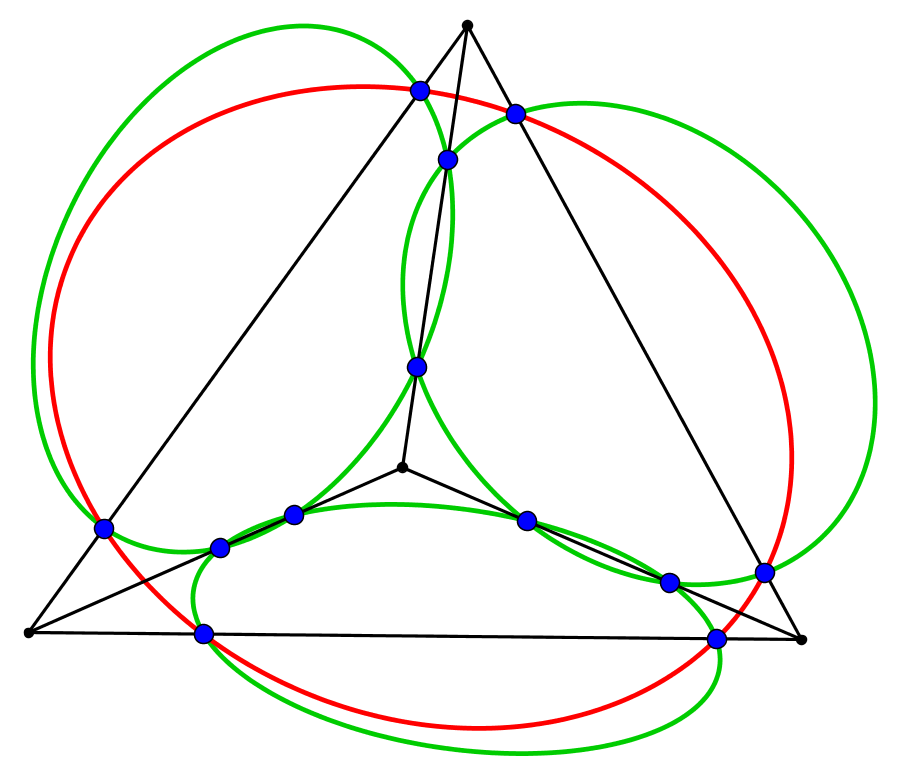}
  \caption{Visualization of the planar version of Theorem~\ref{JRG-Thm}}.
  \label{RichterThmFigure}
\end{figure}

\begin{remark} \label{TwoDimCarnot}
Theorem~\ref{JRG-Thm} can be extended to any oriented triangulated compact 2-manifold $\mathcal M$: 
if all but one of the 6-tuples of points belonging to the triangles of $\mathcal M$ are coconical, then the 
last one is also coconical~\cite{richter2011}.\end{remark}

We shall call a configuration of points and conics which can be constructed by repeated application of Carnot's theorem
a \emph{Carnot configuration}. 

Clearly, it follows from Remark~\ref{TwoDimCarnot} that one can construct infinitely many different Carnot configurations, 
moreover, infinite classes of them.

\begin{example} [\emph{Dipyramidal Carnot configurations}]
For each integer $n \ge 3$, there is a point-conic configuration of type $\bigl((6n)_2, (2n)_6\bigr)$ derived from an
$n$-gonal dipyramid. (Recall that a dipyramid is the symmetric difference of two piramids with equal and common basis; 
here we conceive these figures as 2-manifolds.) Indeed, an $n$-gonal dipyramid has $3n$ edges, thus we have $6n$ 
configuration points. On the other hand, there are $2n$ triangular faces, thus we have the same number of conics.
\end{example}

Using the Cartesian product construction introduced in~\cite{pisanski2013} and independently in~\cite{gevay2013}, 
the third power of the dipyramidal configurations provides an infinite series of balanced Carnot configurations.

\begin{theorem}
For each integer $n \ge 3$, there is a point-conic configuration of type $\bigl((216n^3)_6\bigr)$.
\end{theorem}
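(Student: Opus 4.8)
The plan is to exhibit the desired configuration as the third Cartesian power of the dipyramidal Carnot configuration $D_n$ of type $\bigl((6n)_2,(2n)_6\bigr)$, then verify separately that (i) the combinatorial parameters come out balanced of type $\bigl((216n^3)_6\bigr)$ and (ii) the resulting combinatorial configuration admits a genuine point-conic (indeed Carnot) realization.

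First I would recall the Cartesian product of configurations from \cite{pisanski2013,gevay2013} in the form needed here. Given configurations $\mathcal{C}_1=(P_1,B_1,I_1)$ and $\mathcal{C}_2=(P_2,B_2,I_2)$ \emph{all of whose blocks have a common size $k$}, their product $\mathcal{C}_1\,\square\,\mathcal{C}_2$ has point set $P_1\times P_2$ and two kinds of blocks: the blocks $(b_1,p_2)$ with $b_1\in B_1,\ p_2\in P_2$, declared incident with $(p_1,p_2)$ whenever $p_1 I_1 b_1$; and the blocks $(p_1,b_2)$ with $p_1\in P_1,\ b_2\in B_2$, incident with $(p_1,p_2)$ whenever $p_2 I_2 b_2$. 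Every resulting block again has size $k$, every point $(p_1,p_2)$ lies on exactly $q_1+q_2$ blocks (where $q_i$ is the point-degree of $\mathcal{C}_i$), and there are $|B_1|\,|P_2|+|P_1|\,|B_2|$ blocks in all. Since the construction is associative and preserves the common block size, the third power $D_n^{\,\square 3}=D_n\,\square\,D_n\,\square\,D_n$ is well defined, all its blocks having size $6$.

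Next I would carry out the (routine) parameter bookkeeping. Starting from $D_n=\bigl((6n)_2,(2n)_6\bigr)$, the product $D_n\,\square\,D_n$ has $(6n)^2=36n^2$ points of degree $2+2=4$ and $(2n)(6n)+(6n)(2n)=24n^2$ blocks of size $6$, i.e.\ type $\bigl((36n^2)_4,(24n^2)_6\bigr)$. Multiplying once more by $D_n$ gives $36n^2\cdot 6n=216n^3$ points of degree $4+2=6$ and $24n^2\cdot 6n+36n^2\cdot 2n=144n^3+72n^3=216n^3$ blocks of size $6$, so $D_n^{\,\square 3}$ has type $\bigl((216n^3)_6,(216n^3)_6\bigr)=\bigl((216n^3)_6\bigr)$, which is balanced as claimed; the flag count $216n^3\cdot 6$ agrees on both sides.

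Finally comes the geometric crux, which is where the main work lies, since the white (block) degree here equals $6$ and so Proposition~\ref{prop:PointConic} does \emph{not} supply a realization directly. The idea is a translated-copies realization of the product of two point-conic configurations: realize $\mathcal{C}_2$ by points $\{y_j\}$ and conics, and at each $y_j$ place a translate of a fixed realization $\{x_i\}$ of $\mathcal{C}_1$, so that the product point $(p_1,p_2)=(x_i,y_j)$ sits at $y_j+x_i$. Then a block $(b_1,p_2)$ is realized by the translate $b_1+y_j$ of a conic of $\mathcal{C}_1$ inside the $j$-th cluster, while a block $(p_1,b_2)$ is realized by the translate $b_2+x_i$ of a conic of $\mathcal{C}_2$; since a translate of a conic is again a conic of the same size, all $216n^3$ blocks are honest $6$-point conics. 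Because every coconicity occurring in the product is a translate of a coconicity already present in one of the factors, and Carnot's ratio condition is invariant under translation, the product remains a Carnot configuration; applying this twice yields $D_n^{\,\square 3}$. The obstacle to watch is genericity: one must choose the coordinates of $\{x_i\}$ and $\{y_j\}$ so that no conic passes through an unintended product point, distinct blocks give distinct conics, and the $216n^3$ points stay distinct. These are finitely many non-vanishing conditions, satisfiable by a generic choice of the two realizations, which completes the construction.
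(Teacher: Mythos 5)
Your proposal is correct and follows essentially the same route as the paper: the configuration is obtained as the third Cartesian power of the dipyramidal Carnot configuration $\bigl((6n)_2,(2n)_6\bigr)$, realized geometrically via the Minkowski-sum/translation variant of the product, with the same parameter count $216n^3$ points of degree $6$ and $216n^3$ conics of size $6$. You supply more detail than the paper does (explicit bookkeeping and the genericity conditions ensuring no unintended incidences), but the underlying construction is identical.
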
 

Here we use the following variant of the Cartesian product construction, mentioned in neither of the 
references above, which provides a planar configuration if the original configurations are planar. Let 
$\mathcal C_1=(P_1, B_1, I_1)$ and $\mathcal C_2=(P_2, B_2, I_2)$ be two geometric configurations
given in the same plane. Then the point set $P$ of $\mathcal C_1 \times \mathcal C_2$ is the \emph
{Minkowski sum} of $P_1$ and $P_2$, i.e.\ it can be given as the following vector sum: 
$P = \{\mathbf {v}_1 + \mathbf {v}_2 \,|\, \mathbf v_1 \in P_1, \mathbf v_2 \in P_2\}$
(here a point $p$ is considered as given by its position vector $\mathbf v$). Let $\tau_{\mathbf v}$
denote a translation determined by a vector $\mathbf v$. With this notation, the set $B$ of blocks in 
$\mathcal C_1 \times \mathcal C_2$ is given in the following form:
$$
B = \{\tau_{\mathbf v_1}(B_2) \,|\, \mathbf v_1 \in P_1\} \cup \{\tau_{\mathbf v_2}(B_1) \,|\, \mathbf v_2 \in P_2\}.
$$

\section{An infinite class of point-conic 6-configurations} \label{6-cfg}

Let $m, n\geq 4$ be even integers, and let $\mathcal P_m$ and $\mathcal P_n$ be two regular $n$-gons 
with equal edge length. We conceive these polygons as convex 2-dimensional polytopes (in short, 2-polytopes). 
Take the \emph{Cartesian product} $\mathcal P_m \times \mathcal P_n$~\cite{coxeter1948, perles1967, 
ziegler1998}. This product is a convex 4-polytope which we shall denote by $\mathcal P_{m,n}$. It has 
$m+n$ facets which are right prisms such that they are semiregular 3-polytopes in the sense that their 
bases are regular $n$-gons and $m$-gons, respectively, and their side faces are squares. These prisms 
form two rings (topologically, two solid torii). The prisms within a ring are adjacent along their bases, 
while those belonging to different rings are adjacent along the square faces. By a special type of 
projection onto a hyperplane of $\mathbb E^4$, one obtains a \emph {Schlegel diagram}, which is a 
useful tool for visualizing a 4-polytope in $\mathbb E^3$~\cite{coxeter1948}. In Figure~\ref{schlegel} 
the Cartesian product of two hexagons is visualized in this way. Note that $\mathcal P_{4,4}$ is a special 
case, since it is equal to the 4-cube $Q_4$.
\begin{figure}[!h]
  \centering
    \includegraphics[width=0.5\textwidth]{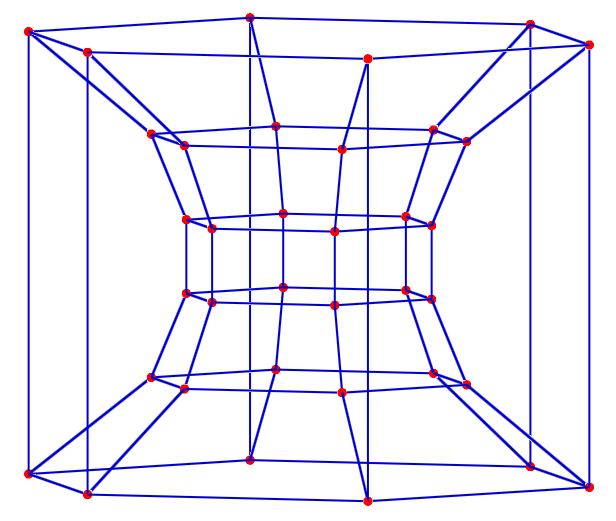}
    \caption{Schlegel diagram of the Cartesian product of two hexagons.}
    \label{schlegel}
\end{figure}

The next step of our construction is to inscribe hexagons in the prismatic facets of $\mathcal P_{m,n}$. 
These are axially symmetric hexagons such that in the generic case $m, n\geq 6$ there is a unique axis 
of symmetry passing through two opposite vertices. The hexagons are positioned so that these latter, 
distinguished vertices coincide with the midpoints of two opposite side edges of the prismatic facet 
(see Figure~\ref{HexagonPosition} for an example of such a hexagon in a hexagonal prism). The 
other four vertices of the hexagon coincide with the midpoints of the base edges adjacent to the 
former side edges. A consequence of this positioning of the hexagons is that they have another axis 
of symmetry perpendicular to the former. Note that in the special case $m, n=4$ these hexagons are 
regular (actually, they are inscribed in a \emph{Petrie polygon} of the facet of the 4-cube, which is 
a skew regular hexagon~\cite{coxeter1948}).

\begin{figure}[!h]
  \centering
    \includegraphics[width=0.45\textwidth]{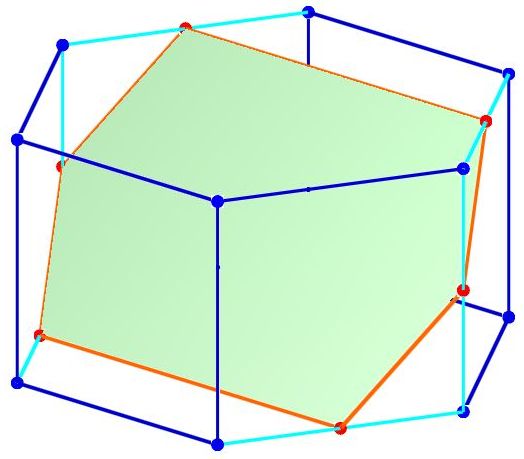}
    \caption{Hexagon in a prismatic facet of $\mathcal P_{m,n}$ with $m=6$.}
    \label{HexagonPosition}
\end{figure}

In this way, we inscribe $m$ hexagons in the $m$-sided prisms, and $n$ hexagons in the $n$-sided 
prisms, altogether $2mn$ hexagons in $\mathcal P_{m,n}$. Observe that $\mathcal P_{m,n}$ has 
$2mn$ edges (for, a base edge of a prism in the one ring of the facets coincides with a side edge of a 
prism in the other ring). Moreover, in each edge of $\mathcal P_{m,n}$ there are three prisms meeting. 
Hence, in the midpoint of each edge there are 6 inscribed hexagons meeting.

As a consequence of their symmetry, we can circumscribe an ellipse around each of the hexagons inscribed in 
$\mathcal P_{m,n}$. These ellipses, together with the midpoints of the edges of $\mathcal P_{m,n}$, form a
point-ellipse configuration of type $\bigl((2mn)_6\bigr)$ in $\mathbb E^4$. By a suitable projection onto a plane 
of $\mathbb E^4$, we obtain a planar point-conic configuration, which we shall denote by $\mathcal C_{2mn}$. 

\section{Examples of type $(32_6)$ and $(96_6)$}

As we noted in the previous Section, the inscribed hexagons are regular hexagons in case $m, n=4$; hence, 
in this case the circumscribed ellipses in $\mathbb E^4$ will be circles. Moreover, the projection onto the plane 
can be performed in such a way that all the circles project into ellipses. Using a particular projection of the 4-cube 
(see Figure~\ref{fig:CubeProjection}, the corresponding planar $(32_6)$ configuration is realized in the form as 
it is shown in Figure~\ref{pmncfg} (cf.~\cite{gevay2009}, Figures 14 and 15). 
\begin{figure}[!h]
  \centering
    \includegraphics[width=0.45\textwidth]{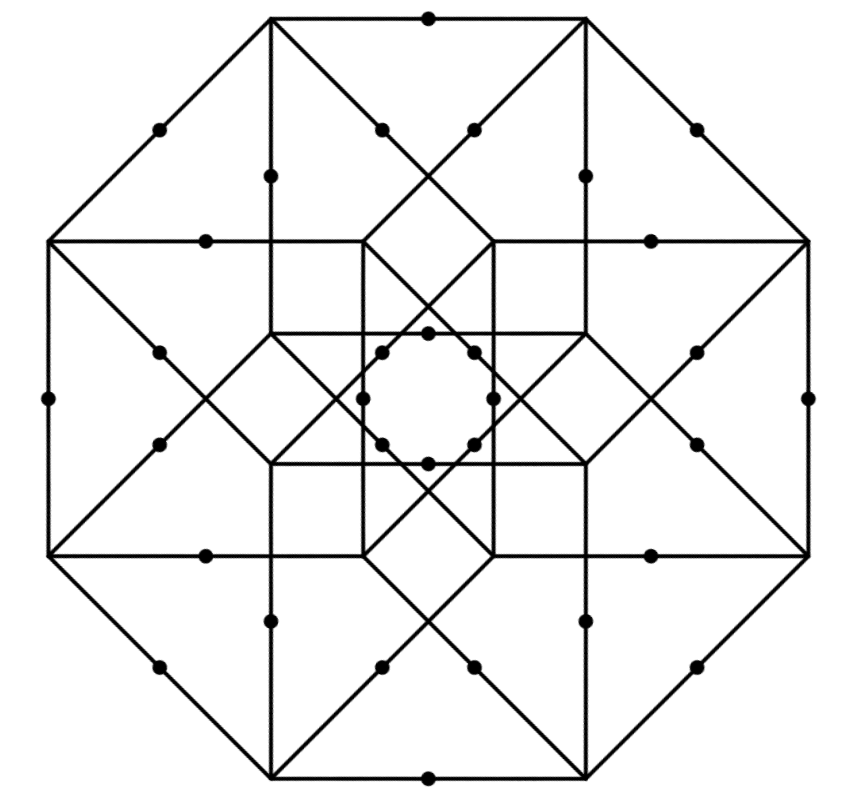}
    \caption{The projection of the 4-cube used in the construction of our $(32_6)$ example.
                The midpoints of the 32 edges of the 4-cube correspnd to the 32 configuration points.}
    \label{fig:CubeProjection}
\end{figure}
\begin{figure}[!h]
  \centering
    \includegraphics[width=0.85\textwidth]{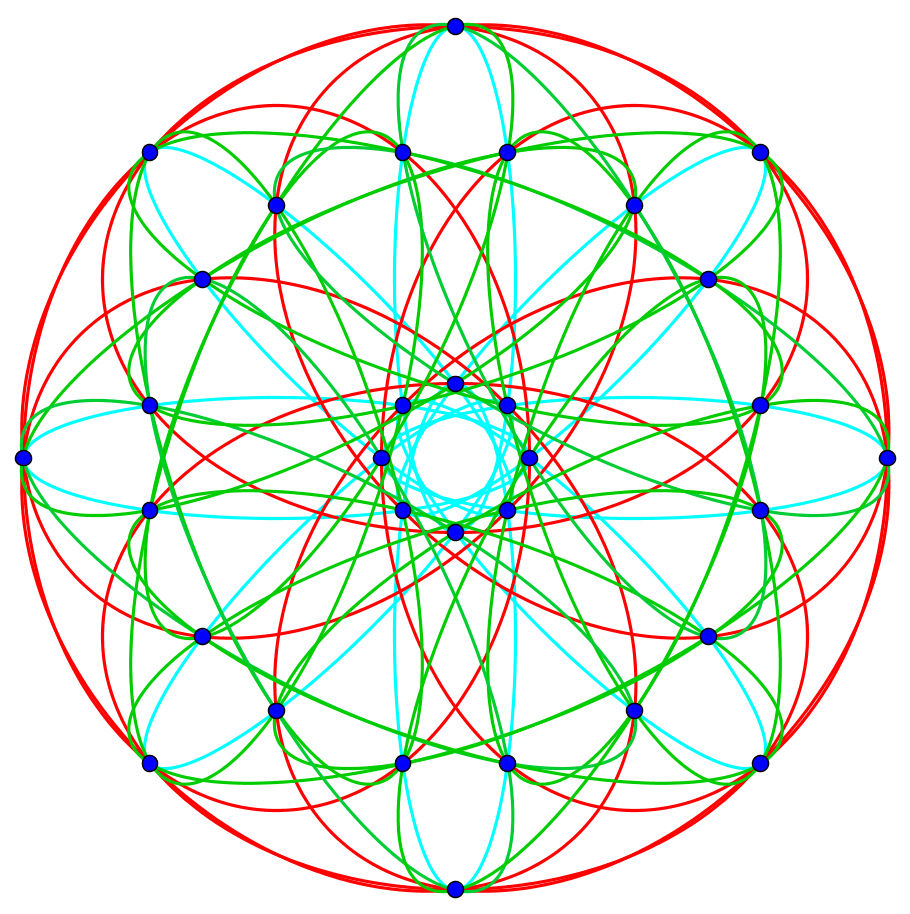}
    \caption{The $(32_6)$ point-ellipse configuration.}
    \label{pmncfg}
\end{figure}
The following observation reveals an interesting aspect of this configuration.
\begin{remark}
The system of the hexagons used in the construction of the $(32_6)$ configuration described above forms a geometric 
realization of a regular map of type $\{6,6\,|\,3,4\}$. This map was mentioned first by Coxeter, 1938~\cite{coxeter1938}, 
and it also occurs in~\cite{mcmullen1988}.
\end{remark}
From this observation the following property follows at once.
\begin{proposition}
The intersection type of this $(32_6)$ configuration is $\{1,2\}$.
\end{proposition}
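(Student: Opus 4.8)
The plan is to translate the statement completely into the language of the regular map $M=\{6,6\,|\,3,4\}$ from the preceding Remark. By construction the $32$ ellipses are precisely the circumscribed conics of the $32$ hexagonal faces of $M$, and the $32$ configuration points are precisely the vertices of $M$, i.e.\ the midpoints of the $32$ edges of $Q_4$. An ellipse passes through a configuration point exactly when the corresponding face of $M$ has that point as a vertex; hence the intersection type of a pair of ellipses equals the number of vertices shared by the two corresponding faces. The task therefore reduces to showing that, over all pairs of distinct faces of $M$, the set of possible numbers of common vertices is exactly $\{1,2\}$.

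First I would record the cheap upper bound. Any two of our ellipses are distinct non-degenerate conics, so by Corollary~\ref{cor:intersect5} they meet in at most four points; thus two faces of $M$ share at most four vertices and $\beta(\mathcal C)$ is a priori contained in $\{1,2,3,4\}$. The real content is to sharpen ``at most four'' to ``at most two'', and here the combinatorics of $M$ — equivalently, the edge-incidence pattern of the inscribed hexagons on $Q_4$ — does the work. Each hexagon uses exactly six of the $32$ edges of $Q_4$, and two faces share a vertex precisely when the two hexagons use a common edge. I would split into two cases according to whether the two hexagons lie in the same cubic cell of $Q_4$ or in different cells.

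In the same-cell case the four hexagons inscribed in a cubic facet correspond to the four Petrie polygons of the cube; a short check — using that the rotation group of the cube acts transitively on unordered pairs of its four space diagonals — shows that any two of them use exactly two common edges of $Q_4$, so such a pair has intersection type $2$. In the different-cell case the two hexagons can only share edges lying in the common square $2$-face of the two cells (opposite cells share no edge at all), and one verifies that a Petrie hexagon meets every face of its cube in exactly two edges; intersecting two such $2$-element edge-sets yields at most two common edges. In every case two distinct faces therefore share at most two vertices, which rules out intersection types $3$ and $4$. To see that both surviving values occur I would exhibit explicit pairs: two Petrie hexagons in a single cell give type $2$, while two hexagons in two adjacent cells chosen so that their respective edges in the common square overlap in exactly one edge give type $1$ (such a choice exists, as one checks directly). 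Combining these observations yields $\beta(\mathcal C)=\{1,2\}$.

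The step I expect to be the main obstacle is the uniform bound ``two faces share at most two vertices.'' Corollary~\ref{cor:intersect5} only supplies four, so the reduction to two rests entirely on the structure of $M$; phrased on $Q_4$, it is the assertion that no two of the $32$ Petrie hexagons use three or more common edges. The cleanest route is the case analysis above, but it is worth emphasizing that this is exactly the information encoded in the hole-parameters $3,4$ of the map $\{6,6\,|\,3,4\}$, which is why, once the regular map of the preceding Remark is recognized, the proposition indeed follows at once.
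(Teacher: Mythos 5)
Your argument is correct and follows exactly the route the paper intends: the paper offers no proof beyond asserting that the proposition ``follows at once'' from the Remark identifying the hexagon system with the regular map $\{6,6\,|\,3,4\}$, and your case analysis (two Petrie hexagons in the same cubic cell of $Q_4$ share exactly two edge-midpoints; hexagons in adjacent cells can only meet inside the common square $2$-face, where each contributes two edges, giving at most two and sometimes exactly one common point) supplies precisely the verification the paper leaves implicit. The only caveat worth recording is the standing assumption, shared with the paper, that the projection to the plane introduces no accidental incidences, so that an ellipse contains a configuration point exactly when that point is a vertex of its hexagon.
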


This property may be useful in finding the answer to the following problem.

\begin{problem}
Can this configuration be realized as a point-circle configuration?
\end{problem}

The construction of $\mathcal P_{4,4}$ can be carried over from the 4-cube $Q_4$ to the case of the regular 24-cell as follows.
Recall that the regular 24-cell is a 4-polytope which has 24 regular octahedra as facets~\cite{coxeter1948}. Since it is self-dual, 
the number of its vertices is also 24. Moreover, it has 96 edges such that in each of them precisely three octahedra are meeting. 

Observe that a plane halving the distance between two opposite faces of a regular octahedron and parallel with them intersects
the octahedron in a regular hexagon. The same hexagon can also be inscribed in the octahedron in such a way that the octahedron 
is conceived as an antiprism, and the vertices of the hexagon coincide with the midpoints of the six side edges of this antiprism.
Since the octahedron has four pairs of parallel faces, there are precisely four regular hexagons inscribed in this way in a regular 
octahedron. Accordingly, one can inscribe in a regular 24-cell altogether 96 regular hexagons such that the vertices of these 
hexagons coincide with the midpoints of 96 edges of the 24-cell. Within each octahedral cell, there are precisely two hexagons
which are incident with the midpoint of a given edge. Since, each edge is incident with three octahedra, each midpoint is incident 
with precisely 6 hexagons.

It follows that circumscribing around each hexagon a circle, one obtains a point-circle configuration of type $(96_6)$ in 
$\mathbb E^4$. By a suitable projection, this configuration can be transformed into a planar point-conic configuration. 

We note that the intersection type of this $(96_6)$ configuration is also $\{1,2\}$. We did not consider the question of its 
realizability (in the plane) by ellipses; as for its realizability with circles, it may prove similarly difficult as that of the $(32_6)$ 
configuration above.

\bibliography{pointellipse}
\bibliographystyle{amsplain}

\end{document}